\numberwithin{equation}{section}
\newtheorem{Theorem}{Theorem}[section]
\newtheorem{Corollary}[Theorem]{Corollary}
\newtheorem{Lemma}[Theorem]{Lemma}
\theoremstyle{definition}
\begin{document}
\allowdisplaybreaks

\renewcommand{\PaperNumber}{026}

\FirstPageHeading

\ShortArticleName{Cohomology of $\mathfrak{sl}_3$ and $\mathfrak{gl}_3$ with Coefficients in Simple Modules}

\ArticleName{Cohomology of $\boldsymbol{\mathfrak{sl}_3}$ and $\boldsymbol{\mathfrak{gl}_3}$ with Coefficients in Simple\\ Modules and Weyl Modules in Positive Characteristics}

\Author{Sherali Sh.~IBRAEV}

\AuthorNameForHeading{Sh.Sh.~Ibraev}

\Address{Korkyt Ata Kyzylorda University, Aiteke bie St.,~29A, 120014, Kzylorda, Kazakhstan}
\Email{\href{mailto:ibrayevsheraly@gmail.com}{ibrayevsheraly@gmail.com}}

\ArticleDates{Received August 12, 2021, in final form March 26, 2022; Published online March 30, 2022}

\Abstract{We calculate the cohomology of $\mathfrak{sl}_3(k)$ over an algebraically closed field $k$ of characteristic $p>3$ with coefficients in simple modules and Weyl modules. We also give descriptions of the corresponding cohomology of $\mathfrak{gl}_3(k)$.}

\Keywords{Lie algebra; simple module; cohomology}

\Classification{17B20; 17B45; 20G05}

\section{Introduction}
There are many remarkable results in the cohomology theory of modular Lie algebras. Any simple module over the restricted Lie algebra with nontrivial cohomology is restricted, see \cite[Theorem~2]{D84}. Modules over a Lie algebra with nonzero cohomology are called {\it peculiar}. For any finite-dimensional Lie algebra, the number of non-isomorphic peculiar indecomposable modules is finite, see \cite[Theorem~1]{D85}.

Now, let $\mathfrak{g}=\mathfrak{sl}_3(k)$. The cohomology with coefficients in simple $\mathfrak{g}$~-modules is completely described only in small characteristics $p=2,3$, see \cite[Theorem~1]{IT21}, \cite[Theorem~1]{IIE21}. In~the case where $p>3$, the cohomology of simple $\mathfrak{g}$-mo\-dules are known in the following cases: for~$H^1(\mathfrak{g},M)$, see \cite[p.~301]{Jan91}, for $H^2(\mathfrak{g},M)$, see \cite[Theorem~1.1]{DI02}. In~small degrees, the cohomology has the following interpretations: $H^1({\mathfrak{g}},{\mathfrak{g}})$ is identified with the space of outer differentiation and $H^2({\mathfrak{g}},{\mathfrak{g}})$ is identified with the space of local deformations of the Lie algebra~$\mathfrak{g}$. It is known that these spaces are trivial, see \cite[p.~124]{Perm05}, \cite[p.~125]{Che05}, \cite[Lemma~2.2.1b]{BGLL}, \cite[p.~32]{BGL}. The cohomology of ${\mathfrak{g}}$ with coefficients in the trivial module is also known, see \cite[p.~42]{BGL}. In~other cases, the interpretation of the cohomology with coefficients in simple modules remains open. In~this paper, we give a complete description of the cohomology of $\mathfrak{sl}_3(k)$ with coefficients in the simple modules for $p>3$.

\subsection{Notation}%\label{ss12}
Let $\mathfrak{g}=\mathfrak{sl}_3(k)$ over an
algebraically closed field $k$ of characteristic
$p>3$ and $M$ a simple $\mathfrak{g}$-mo\-dule. Let $L(r,s)$ denote a simple $\mathfrak{g}$-module with the highest weight
$r\omega_1+s\omega_2$, where $\omega_1$, $\omega_2$ are fundamental weights.

Let $G={\rm SL}_3(k)$; it is an algebraic group, its Lie algebra is $\mathfrak{sl}_3(k)$. We will consider cohomology of $\mathfrak{sl}_3(k)$ as $G$-modules. Let $V$ be a $G$-module and $M$ be a simple $G$-module. We define a~{\it composition coefficient} $[V:M]$ for $M$ from the formula
\begin{gather*}
\operatorname{ch}(V)=\sum_{M \ \text{is simple}}[V:M]\operatorname{ch}(M),
\end{gather*}
where $\operatorname{ch}(V)$ is the formal character of the $G$-module $V$.
If $[V:M]\neq 0$, then we say that $M$ is a {\it composition factor of $V$.}

For a vector space $L$ over $k$, we denote by $L^{(1)}$ the vector space over $k$ that coincides with $L$ as an additive group and with the scalar multiplication given by
\begin{gather*}
a\cdot v=\sqrt[p]{a}v\qquad\text{for all}\quad a\in k, \quad v\in L,
\end{gather*}
where the left hand side is the new multiplication and the right hand side the old one. If $L$ is a~$G$-module, then $L^{(1)}$ is also a
$G$-module using the given action of any $g\in G$ on the additive group $L^{(1)}=L$. The new $G$-module $L^{(1)}$ is called the {\it Frobenius twist} of $L$. We define {\it higher Frobenius twists} inductively:
$L^{(d+1)}=\big(L^{(d)}\big)^{(1)}$. To each weight $\mu$ of the space $L$ there corresponds the weight $p^d\mu$ of the space $L^{(d)}$.

A weight $r\omega_1+s\omega_2$ is {\it restricted} if $0\leq r, s\leq p-1$. The composition factors of $H^n(\mathfrak{g},M)$ are Frobenius twists of some simple $G$-modules with restricted highest weights.

A $G$-module $L$ is {\it rational} if the corresponding representation is a homomorphism from $G$ to~${\rm GL}(L)$. Suppose $V$ is the Frobenius twist of some rational $G$-module. Then, there is a unique $d>0$ and rational $G$-module $L$ such that $L^{(d)}=V$. Denote this module by $V^{(-d)}$.

We will usually denote $H^n(\mathfrak{g},k)$ by $H^n(\mathfrak{g})$, and use the following short notation:
\begin{gather*}
mV:=V\oplus\cdots\oplus V\qquad (m\text{ summands}),
\end{gather*}
where $V$ is a $G$-module.

\subsection{Main result}
%\label{ss13}

In this paper, $k$ is always algebraically closed field $k$ of characteristic
$p>3$.
\begin{Theorem}\label{th1} Let $\mathfrak{g}=\mathfrak{sl}_3(k)$ and $M$ be a simple $\mathfrak{g}$-module. Then, the following isomorphisms of $G$-modules hold:

\begin{enumerate}\itemsep=0pt
\item[$(a)$] $H^n(\mathfrak{g})\cong k$ for $n=0,3,5,8;$

\item[$(b)$] $H^n(\mathfrak{g},L(p-2,1))\cong
\begin{cases}
L(1,0)^{(1)}&\text{if}\ n=1,7,
\\
2L(1,0)^{(1)}&\text{if}\ n=4;\end{cases}$

\item[$(c)$] $H^n(\mathfrak{g},L(1,p-2))\cong
\begin{cases}
L(0,1)^{(1)}&\text{if}\ n=1,7,
\\
2L(0,1)^{(1)}&\text{if}\ n=4;
\end{cases}$

\item[$(d)$] $H^n(\mathfrak{g},L(p-3,0))\cong L(1,0)^{(1)}$ for $n=2,3,5,6;$

\item[$(e)$] $H^n(\mathfrak{g},L(0,p-3))\cong L(0,1)^{(1)}$ for $n=2,3,5,6;$

\item[$(f)$] $H^n(\mathfrak{g},L(p-2,p-2))\cong
\begin{cases}
k&\text{if}\ n=1,7,
\\
L(1,1)^{(1)}&\text{if}\ n=3,5,
\\
2L(1,1)^{(1)}\oplus 2k&\text{if}\ n=4.
\end{cases}$
\end{enumerate}
Otherwise, $H^n(\mathfrak{g},M)=0$.
\end{Theorem}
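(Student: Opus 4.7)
The plan has two phases: first, reduce to a finite list of candidate simple modules via central-character arguments; second, bootstrap the cohomology of the surviving simples from that of a few Weyl modules using short exact sequences.

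For the reduction, I would invoke Dzhumadil'daev's theorem \cite[Theorem~2]{D84} to restrict attention to $M = L(r,s)$ with $0 \le r, s \le p-1$. Among these, the Casimir element of $U(\mathfrak{g})$ acts on $\operatorname{Ext}^*_{U(\mathfrak{g})}(k,L(\lambda))$ through two eigenvalues that must coincide: its value on the trivial module $k$, which is zero, and its value on $L(\lambda)$, proportional to $r^2 + rs + s^2 + 3(r+s)$. Together with the linkage principle applied to the principal block, this congruence carves the $p^2$ restricted weights down to exactly the six listed in the theorem.

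For the bootstrap, the Jantzen sum formula for $\mathfrak{sl}_3$ with $p>3$ shows that among the six Weyl modules $V(\lambda)$ for these weights, only three are non-simple, giving short exact sequences
\begin{gather*}
0\to L(p-3,0)\to V(p-2,1)\to L(p-2,1)\to 0,
\\
0\to L(0,p-3)\to V(1,p-2)\to L(1,p-2)\to 0,
\\
0\to k\to V(p-2,p-2)\to L(p-2,p-2)\to 0,
\end{gather*}
whereas $V(0,0)=k$, $V(p-3,0)=L(p-3,0)$ and $V(0,p-3)=L(0,p-3)$ are already simple. Hence, once $H^*(\mathfrak{g},k)$ (which recovers the classical answer of part~$(a)$) and the Weyl-module cohomology groups $H^*(\mathfrak{g},V(p-3,0))$ and $H^*(\mathfrak{g},V(0,p-3))$ have been computed by a separate argument --- this being the content of the companion Weyl-module result promised in the abstract --- the three displayed short exact sequences, with their long exact cohomology sequences, pin down the remaining three $H^*(\mathfrak{g},L(\lambda))$. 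Poincar\'e duality $H^n(\mathfrak{g},M)\cong H^{8-n}(\mathfrak{g},M^*)^*$ for the $8$-dimensional perfect Lie algebra $\mathfrak{g}$ pairs case~$(b)$ with~$(c)$ through $L(p-2,1)^*\cong L(1,p-2)$, pairs~$(d)$ with~$(e)$ through $L(p-3,0)^*\cong L(0,p-3)$, and symmetrizes the degrees in cases~$(a)$ and~$(f)$; this halves the workload and provides internal consistency checks.

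The main obstacle will be controlling the connecting homomorphisms in the three long exact sequences. Each is $G$-equivariant and lands in cohomology which --- as recorded in the introduction --- decomposes into Frobenius twists of restricted simples, but the doubled multiplicities at middle degree, for instance $2L(1,0)^{(1)}$ at $n=4$ in~$(b)$ and $2L(1,1)^{(1)}\oplus 2k$ at $n=4$ in~$(f)$, mean that several connecting maps must genuinely vanish rather than cancelling adjacent cohomology groups. Establishing these vanishings requires careful weight-space comparisons against the known $G$-module structure of the Weyl module cohomology, and this combinatorial bookkeeping forms the technical heart of the proof.
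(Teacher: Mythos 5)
Your reduction phase is sound and matches the paper's starting point: by \cite[Theorem~2]{D84} a peculiar simple module is restricted, and the central character/linkage argument does cut the restricted weights down to the six weights $0$, $(p-2)\omega_1+\omega_2$, $\omega_1+(p-2)\omega_2$, $(p-3)\omega_1$, $(p-3)\omega_2$, $(p-2)(\omega_1+\omega_2)$ (the paper simply cites \cite{DI02} for this list). The duality $H^n(\mathfrak{g},M^*)\cong\big(H^{\dim\mathfrak{g}-n}(\mathfrak{g},M)\big)^*$ is also used in the paper, in exactly the way you propose, to pair $(b)$ with $(c)$ and $(d)$ with $(e)$.

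The bootstrap phase, however, has a genuine gap, and it sits precisely where the real work of the theorem lies. To extract $H^*(\mathfrak{g},L(p-2,1))$ from the sequence $0\to L(p-3,0)\to V(p-2,1)\to L(p-2,1)\to 0$ you need the cohomology of the \emph{middle} term $V(p-2,1)$, not just of the sub $L(p-3,0)$; likewise for $V(1,p-2)$ and $V(p-2,p-2)$. These Weyl modules are not $\mathfrak{g}$-acyclic (only $G$-acyclic), and their $\mathfrak{g}$-cohomology is substantial --- e.g.\ $H^n(\mathfrak{g},V(p-2,1))\cong L(1,0)^{(1)}$ for $n=1,2,3,5,6,7$. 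In the paper these Weyl-module cohomologies are Corollaries~\ref{c2} and~\ref{c3}, \emph{derived from} Theorem~\ref{th1}; they are not an independently available ``companion result,'' so your argument is circular as stated. The same problem afflicts your base cases: $H^*(\mathfrak{g},V(p-3,0))=H^*(\mathfrak{g},L(p-3,0))$ is literally part $(d)$ of the theorem, and you give no method for computing it. What is missing is the computational engine the paper actually supplies: (i) for $L(p-2,1)$ and $L(p-3,0)$, explicit enumeration of the weight-$p\omega_1$ cochain spaces $\overline{C}^n_{p\omega_1}$ and rank computations for the cocycle conditions, combined with the Euler-type identity \eqref{dim3}; and (ii) for the $(p-2)(\omega_1+\omega_2)$ case, an independent computation of $H^*\big(\mathfrak{g},H^0(p-2,p-2)\big)$ via the Andersen--Jantzen formula for $H^*\big(G_1,H^0(\lambda)\big)$ together with the Friedlander--Parshall--Farnsteiner spectral sequence relating restricted to ordinary cohomology, after which the long exact sequence of $0\to L(p-2,p-2)\to H^0(p-2,p-2)\to k\to 0$ is used exactly as you envisage. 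Without some replacement for (i) and (ii), the doubled multiplicities at $n=4$ that you correctly flag as the delicate point cannot even be set up, let alone resolved.
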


This theorem completes the description of the cohomology of $\mathfrak{sl}_3(k)$ with coefficients in simple modules over an algebraically closed fields of positive characteristics.

\subsection{Some applications of the main result}

Using Theorem~\ref{th1}, one can easily describe the cohomology of $\mathfrak{gl}_3(k)$ with coefficients in simple modules. Let $M$ be an $\mathfrak{sl}_3(k)$-module.
Since $\mathfrak{gl}_3(k)\cong \mathfrak{sl}_3(k)\oplus I$, where $I$ is the subspace spanned by the identity
$3\times 3$ matrix, a $\mathfrak{gl}_3(k)$-module structure on $M$ can be
determined by setting
\begin{gather}
(x,a)m=xm+\mu(a)m\qquad \text{for any}\quad (x,a)\in \mathfrak{gl}_3(k),\quad
x\in\mathfrak{sl}_3(k),\quad\text{and}\quad a\in I,\label{eq0}
\end{gather}
where $\mu$ is a linear form on $I$. We denote the obtained $\mathfrak{gl}_3(k)$-module
also by $M$. Using Theorem~\ref{th1} and the isomorphism (see \cite[p.~737]{IT21})
\begin{gather*}
H^n(\mathfrak{gl}_3(k),M)\cong H^n(\mathfrak{sl}_3(k),M)\oplus H^{n-1}(\mathfrak{sl}_3(k),M)
\end{gather*}
for $\mu=0$, we obtain for the cohomology of simple $\mathfrak{gl}_3(k)$-modules the following

\begin{Corollary}\label{c1}
Let $\mathfrak{g}=\mathfrak{gl}_3(k)$, let $M$ be a simple $\mathfrak{g}$-module defined by the formula~\eqref{eq0}. If~$\mu=0$, then the following isomorphisms of $G$-modules hold:

\begin{enumerate}\itemsep=0pt

\item[$(a)$] $H^n(\mathfrak{g})\cong k$ for $n=0,1,3,4,5,6,8,9;$

\item[$(b)$] $H^n(\mathfrak{g},L(p-2,1))\cong
\begin{cases}
L(1,0)^{(1)}&\text{if}\ n=1,2,7,8,
\\
2L(1,0)^{(1)}&\text{if}\ n=4,5;
\end{cases}$

\item[$(c)$] $H^n(\mathfrak{g},L(1,p-2))\cong
\begin{cases}
L(0,1)^{(1)}&\text{if}\ n=1,2,7,8,
\\
2L(0,1)^{(1)}&\text{if}\ n=4,5;
\end{cases}$

\item[$(d)$] $H^n(\mathfrak{g},L(p-3,0))\cong
\begin{cases}
L(1,0)^{(1)}&\text{if}\ n=2,4,5,7,
\\
2L(1,0)^{(1)}&\text{if}\ n=3,6;
\end{cases}$

\item[$(e)$] $H^n(\mathfrak{g},L(0,p-3))\cong
\begin{cases}
L(0,1)^{(1)}&\text{if}\ n=2,4,5,7,
\\
2L(0,1)^{(1)}&\text{if}\ n=3,6;
\end{cases}$

\item[$(f)$] $H^n(\mathfrak{g},L(p-2,p-2))\cong
\begin{cases}
k&\text{if} \ n=1,2,7,8,\\
L(1,1)^{(1)}&\text{if}\ n=3,6,\\
3L(1,1)^{(1)}\oplus 2k&\text{if}\ n=4,5.
\end{cases}$
\end{enumerate}
Otherwise, $H^n(\mathfrak{g},M)=0$.
\end{Corollary}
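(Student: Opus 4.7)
The plan is to derive Corollary~\ref{c1} as a direct consequence of Theorem~\ref{th1} and the decomposition isomorphism
\begin{gather*}
H^n(\mathfrak{gl}_3(k),M)\cong H^n(\mathfrak{sl}_3(k),M)\oplus H^{n-1}(\mathfrak{sl}_3(k),M)
\end{gather*}
which is recalled (with reference to \cite[p.~737]{IT21}) in the paragraph preceding the corollary and which is valid precisely because $\mu=0$ makes the central one-dimensional ideal $I\subset\mathfrak{gl}_3(k)$ act trivially on~$M$. Once these two ingredients are in place, the proof reduces to a mechanical case-by-case substitution, so my plan is to organize the argument by the six subcases $(a)$--$(f)$ of Theorem~\ref{th1} and record the resulting answers for $\mathfrak{gl}_3(k)$.

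First I would state the decomposition formula and briefly indicate why the splitting is a splitting of $G$-modules (the ideal~$I$ is $G$-stable and acts trivially on~$M$, and the K\"unneth-type argument that produces the isomorphism preserves the $G$-action). Then, for each simple $\mathfrak{g}$-module appearing in Theorem~\ref{th1}, I would read off the list of degrees where $H^n(\mathfrak{sl}_3(k),M)$ is nonzero and compute $H^n(\mathfrak{sl}_3(k),M)\oplus H^{n-1}(\mathfrak{sl}_3(k),M)$. For instance, in case $(a)$ the nonzero degrees $\{0,3,5,8\}$ of Theorem~\ref{th1}$(a)$ produce nonzero contributions at $n\in\{0,1,3,4,5,6,8,9\}$, each of dimension $1$; in case $(f)$ the degree $n=4$ combines $H^4\oplus H^3=(2L(1,1)^{(1)}\oplus 2k)\oplus L(1,1)^{(1)}$, yielding $3L(1,1)^{(1)}\oplus 2k$, and analogously for $n=5$. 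The remaining items~$(b)$--$(e)$ are handled the same way. Finally, I would note that if $M$ is not isomorphic to any of the modules listed in Theorem~\ref{th1}, then both summands vanish, giving the ``otherwise'' clause.

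There is essentially no technical obstacle: the genuine content sits entirely in Theorem~\ref{th1} and in the decomposition isomorphism, and the only place one must be careful is bookkeeping, making sure that for each degree $n$ one collects the contributions from both $H^n(\mathfrak{sl}_3(k),M)$ and $H^{n-1}(\mathfrak{sl}_3(k),M)$ (and that the two summands involve the same simple module~$M$ in both factors, which is automatic). I would therefore present the proof as a short paragraph invoking the two ingredients, followed by a table or itemized list verifying the six cases.
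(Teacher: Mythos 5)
Your proposal is correct and is exactly the paper's argument: the author also derives Corollary~\ref{c1} by combining Theorem~\ref{th1} with the isomorphism $H^n(\mathfrak{gl}_3(k),M)\cong H^n(\mathfrak{sl}_3(k),M)\oplus H^{n-1}(\mathfrak{sl}_3(k),M)$ for $\mu=0$ and performing the same degree-by-degree bookkeeping. Your sample computations (e.g., the degrees $\{0,1,3,4,5,6,8,9\}$ in case $(a)$ and $3L(1,1)^{(1)}\oplus 2k$ in degrees $4,5$ of case $(f)$) agree with the stated result.
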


The results of Corollary~\ref{c1} can be applied to describe the cohomology of the general Lie algebra of Cartan type $W_3(\mathbf{m})$ (for the definition of $W_n(\mathbf{m})$, see \cite[Section~1.3]{IT21}). For example, using in \cite[Theorem~0.2]{BY17} and the statement $(a)$ of Corollary~\ref{c1}, one can easily describe the cohomology of the restricted Lie algebra of Cartan type $W_3(\mathbf{1})$ with coefficients in the divided power algebra.

Let $V(\lambda)$ be the Weyl module with highest weight $\lambda=r\omega_1+s\omega_2$ (for the definition, see Section~\ref{ss22}) and $H^0(\lambda)=V(-w_0(\lambda))^*$, where $w_0$ is the longest element of the Weyl group $W$ of the Lie algebra $\mathfrak{g}$. As an $G$-module, $H^0(\lambda)$ is isomorphic to the induced $G$-module $\operatorname{Ind}_B^{G}(k_{\lambda})$, where $B$ is the Borel subgroup of $G$, corresponding to the negative roots, and $k_{\lambda}$ is a one-dimensional $B$-module. A module $V$ over $G$ is {\it $G$-acyclic}, if $H^n(G,V)=0$ for all $n>0$. The restricted weights $\lambda$ and $\mu$ are {\it linked} if there is $w\in W$ such that
\begin{gather*}
\lambda+\rho\equiv w(\mu+\rho)\mod pX(T),
\end{gather*}
where $\rho$ is the half-sum of positive roots and $X(T)$ is the additive character group of the maximal torus $T$ of $G$. We say that two $G$-modules with highest weights are linked if their highest weights are linked.
As is well-known, $H^0(\lambda)$ and $V(\lambda)$ are $G$-acyclic, see \cite[Corollary~3.4]{CPSK77}. But, from the proof of Theorem~\ref{th1}, we will see that the $\mathfrak{g}$-modules $H^0(\lambda)$ and $V(\lambda)$, linked with simple peculiar modules, are peculiar for $\mathfrak{g}$. For the cohomology of these modules, the following result occurs:

\begin{Corollary}\label{c2} Let $\mathfrak{g}=\mathfrak{sl}_3(k)$ and $V=H^0(\lambda)$. Then, the following isomorphisms of $G$-mo\-dules hold:

\begin{enumerate}\itemsep=0pt

\item[$(a)$] $H^n(\mathfrak{g},H^0(0,0))\cong k$ for $n=0,3,5,8;$

\item[$(b)$] $H^n(\mathfrak{g},H^0(p-2,1))\cong
\begin{cases}
L(1,0)^{(1)}&\text{if}\ n=1,2,3,5,6,7,\\
2L(1,0)^{(1)}&\text{if}\ n=4;
\end{cases}$

\item[$(c)$] $H^n(\mathfrak{g},H^0(1,p-2))\cong
\begin{cases}
L(0,1)^{(1)}&\text{if}\ n=1,2,3,5,6,7,\\
2L(0,1)^{(1)}&\text{if}\ n=4;
\end{cases}$

\item[$(d)$] $H^n(\mathfrak{g},H^0(p-3,0))\cong L(1,0)^{(1)}$ for $n=2,3,5,6;$

\item[$(e)$] $H^n(\mathfrak{g},H^0(0,p-3))\cong L(0,1)^{(1)}$ for $n=2,3,5,6;$

\item[$(f)$] $H^n(\mathfrak{g},H^0(p-2,p-2))\cong
\begin{cases}
L(1,1)^{(1)}&\text{if}\ n=3,\\
2L(1,1)^{(1)}\oplus k&\text{if}\ n=4,\\
L(1,1)^{(1)}\oplus k&\text{if}\ n=5,\\
k&\text{if}\ n=7,8.
\end{cases}$
\end{enumerate}
 Otherwise, $H^n(\mathfrak{g},V)=0$.
\end{Corollary}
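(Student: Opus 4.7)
My strategy is to reduce the computation of $H^n(\mathfrak{g},H^0(\lambda))$ to the cohomology of the composition factors of $H^0(\lambda)$, which are precisely the simple modules handled by Theorem~\ref{th1}. The first step is to identify these composition factors via the linkage principle: $L(\mu)$ appears in $H^0(\lambda)$ only if $\mu\le\lambda$ and $\mu+\rho$ lies in the affine Weyl group orbit of $\lambda+\rho$. For $\lambda\in\{(0,0),(p-3,0),(0,p-3)\}$, one checks that $\lambda+\rho$ lies in the fundamental alcove ($\langle\lambda+\rho,\alpha^\vee\rangle\in[1,p-1]$ for every positive root $\alpha$, including the highest root $\theta=\alpha_1+\alpha_2$), hence $H^0(\lambda)\cong L(\lambda)$ and parts~$(a)$, $(d)$, $(e)$ reproduce Theorem~\ref{th1}$(a)$, $(d)$, $(e)$ verbatim.

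For $\lambda=(p-2,1)$, the weight $\lambda+\rho=(p-1,2)$ lies just across the wall $\langle\cdot,\theta^\vee\rangle=p$ from the fundamental alcove, and reflection in this wall sends it to $(p-2,1)=(p-3,0)+\rho$; since $(p-3,0)$ is the only non-trivial linked dominant weight below $\lambda$, one obtains a short exact sequence
\begin{gather*}
0\to L(p-2,1)\to H^0(p-2,1)\to L(p-3,0)\to 0
\end{gather*}
with $L(p-2,1)$ as the socle. The cohomologies of the two ends (Theorem~\ref{th1}$(b)$ and $(d)$) are supported in the disjoint degree sets $\{1,4,7\}$ and $\{2,3,5,6\}$, so in the resulting long exact sequence each $H^n(\mathfrak{g},H^0(p-2,1))$ is determined by at most one end, up to the two a priori non-zero connecting maps $\delta^3,\delta^6\colon L(1,0)^{(1)}\to mL(1,0)^{(1)}$. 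These are killed by a $G$-equivariant comparison (equivalently, by an Euler-characteristic check against the composition series), yielding part~$(b)$; part~$(c)$ follows by the duality $\omega_1\leftrightarrow\omega_2$.

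The case $\lambda=(p-2,p-2)$ is analogous but more delicate. Here $\lambda+\rho=(p-1,p-1)$ has $\langle\lambda+\rho,\theta^\vee\rangle=2p-2$, and reflection in the $\theta^\vee=p$ wall yields $(1,1)=(0,0)+\rho$, the only non-trivial linked dominant weight strictly below $\lambda$; thus
\begin{gather*}
0\to L(p-2,p-2)\to H^0(p-2,p-2)\to k\to 0.
\end{gather*}
The cohomologies of the two ends (Theorem~\ref{th1}$(a)$ and $(f)$) now overlap in several degrees, so the connecting maps must be analyzed individually. The decisive input is that the socle of $H^0(\lambda)$ equals $L(\lambda)\neq k$, forcing $H^0(\mathfrak{g},H^0(p-2,p-2))=0$; the first connecting map $\delta^0\colon H^0(\mathfrak{g},k)=k\to H^1(\mathfrak{g},L(p-2,p-2))=k$ must therefore be an isomorphism, killing $H^1$ simultaneously. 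The remaining $\delta^n$ are $G$-equivariant maps, most of which vanish by Schur's lemma (e.g.\ there is no nonzero $G$-map $k\to L(1,1)^{(1)}$), and the surviving ones are pinned down by the long exact sequence together with a character count, producing the values in~$(f)$.

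The main obstacle will be justifying the detailed vanishing (and non-vanishing) of the connecting maps in cases $(b)$ and $(f)$: these are constrained by $G$-equivariance but not always forced by Schur's lemma alone, so one must combine the $G$-module structure of each cohomology group with an Euler-characteristic count, or with an explicit analysis of the extension class in $\mathrm{Ext}^1_\mathfrak{g}(L(\mu),L(\lambda))$ represented by each short exact sequence, to pin them down.
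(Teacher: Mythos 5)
Your overall strategy---identify the composition factors of $H^0(\lambda)$ by linkage, reduce to the short exact sequences $0\to L(p-2,1)\to H^0(p-2,1)\to L(p-3,0)\to 0$ and $0\to L(p-2,p-2)\to H^0(p-2,p-2)\to k\to 0$, and run the long exact cohomology sequences against Theorem~\ref{th1}---is exactly the paper's strategy for parts $(a)$--$(e)$ and for the upper degrees of $(f)$, and your treatment of $(a)$, $(d)$, $(e)$ and of $H^0$, $H^1$ in case $(f)$ (via the simple socle) is sound. The gap is in how you dispose of the connecting homomorphisms. In case $(b)$ the two dangerous maps are $\delta^3\colon H^3(\mathfrak{g},L(p-3,0))\cong L(1,0)^{(1)}\to H^4(\mathfrak{g},L(p-2,1))\cong 2L(1,0)^{(1)}$ and $\delta^6\colon L(1,0)^{(1)}\to L(1,0)^{(1)}$: these are $G$-maps between sums of copies of the \emph{same} simple module, so Schur's lemma does not kill them, and an Euler-characteristic comparison is vacuous here because a long exact sequence preserves the alternating sum of dimensions no matter what the connecting maps do (if $\delta^3$ were injective you would get $H^3\cong 0$ and $H^4\cong L(1,0)^{(1)}$, equally consistent with the character count). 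The same problem recurs, more seriously, in case $(f)$: to obtain $H^3\cong L(1,1)^{(1)}$ and $H^4\cong 2L(1,1)^{(1)}\oplus k$ rather than $L(1,1)^{(1)}\oplus k$ and $2L(1,1)^{(1)}\oplus 2k$, you must show that $\delta^3\colon H^3(\mathfrak{g},k)\cong k\to H^4(\mathfrak{g},L(p-2,p-2))\cong 2L(1,1)^{(1)}\oplus 2k$ is injective, i.e., hits the trivial isotypic part nontrivially; neither Schur's lemma nor a character count decides this, and the nonvanishing of the extension class in $H^1(\mathfrak{g},L(p-2,p-2))$ alone does not imply that cup product with it is injective on $H^3(\mathfrak{g},k)$.

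The paper resolves precisely these ambiguities by an input your proposal lacks: it computes $H^n\big(\mathfrak{g},H^0(p-2,p-2)\big)$ for $n\le 4$ \emph{independently} of Theorem~\ref{th1}$(f)$, via the Andersen--Jantzen formula for $H^\bullet\big(G_1,H^0(\lambda)\big)$ combined with the Friedlander--Parshall--Farnsteiner spectral sequence relating restricted to ordinary cohomology (Lemmas~\ref{l6} and~\ref{l7}), and only then deduces the cohomology of $L(p-2,p-2)$ and the remaining degrees of $H^0(p-2,p-2)$ from the long exact sequence; in case $(b)$ it uses a socle argument on $H^0(p-2,1)/L(p-2,1)$ to fix $H^3$. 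To repair your proof you would need either such an independent computation of at least one term adjacent to each undetermined connecting map, or a genuine evaluation of the cup product with the extension class---``$G$-equivariant comparison'' and ``character count'' as stated will not close the argument.
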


%\newpage
\begin{Corollary}\label{c3} Let $\mathfrak{g}=\mathfrak{sl}_3(k)$ and $V=V(\lambda)$. Then, the following isomorphisms of $G$-modules hold:

\begin{enumerate}\itemsep=0pt

\item[$(a)$] $H^n(\mathfrak{g},V(0,0))\cong k$ for $n=0,3,5,8;$

\item[$(b)$] $H^n(\mathfrak{g},V(p-2,1))\cong
\begin{cases}
L(1,0)^{(1)}&\text{if}\ n=1,2,3,5,6,7,
\\
2L(1,0)^{(1)}&\text{if}\ n=4;
\end{cases}$

\item[$(c)$] $H^n(\mathfrak{g},V(1,p-2))\cong
\begin{cases}
L(0,1)^{(1)}&\text{if}\ n=1,2,3,5,6,7,\\
2L(0,1)^{(1)}&\text{if}\ n=4;
\end{cases}$

\item[$(d)$] $H^n(\mathfrak{g},V(p-3,0))\cong L(1,0)^{(1)}$ for $n=2,3,5,6;$

\item[$(e)$] $H^n(\mathfrak{g},V(0,p-3))\cong L(0,1)^{(1)}$ for $n=2,3,5,6;$

\item[$(f)$] $H^n(\mathfrak{g},V(p-2,p-2))\cong
\begin{cases}
k&\text{if}\ n=0,1,\\
L(1,1)^{(1)}\oplus k&\text{if}\ n=3,\\
2L(1,1)^{(1)}\oplus k&\text{if}\ n=4,\\
L(1,1)^{(1)}&\text{if}\ n=5.
\end{cases}$
\end{enumerate}
Otherwise, $H^n(\mathfrak{g},V)=0$.
\end{Corollary}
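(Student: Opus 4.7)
The plan is to deduce Corollary~\ref{c3} from Corollary~\ref{c2} by invoking Poincar\'e duality for the Lie algebra cohomology of $\mathfrak{g}=\mathfrak{sl}_3(k)$. Since $\dim\mathfrak{g}=8$ and $\mathfrak{g}$ is semisimple, it is unimodular; moreover $\Lambda^8\mathfrak{g}^*$ is a trivial $G$-module because the adjoint representation of $G={\rm SL}_3(k)$ on $\mathfrak{g}$ has trivial determinant. The Chevalley--Eilenberg pairing then yields a $G$-equivariant chain-level duality, and hence a $G$-module isomorphism
\[
H^n(\mathfrak{g},V)\cong H^{8-n}(\mathfrak{g},V^*)^*
\]
for every finite-dimensional rational $G$-module $V$. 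Combined with the standard identification $V(\lambda)^*\cong H^0(-w_0\lambda)$, this reduces each case of Corollary~\ref{c3} to the corresponding case of the already-established Corollary~\ref{c2}, via
\[
H^n(\mathfrak{g},V(\lambda))\cong H^{8-n}\big(\mathfrak{g},H^0(-w_0\lambda)\big)^*.
\]

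Next I walk through the weights. Using $-w_0\omega_1=\omega_2$ and $-w_0\omega_2=\omega_1$, the six restricted weights appearing in Corollary~\ref{c3} are permuted in pairs by $\lambda\mapsto -w_0\lambda$: the weights $(0,0)$ and $(p-2,p-2)$ are fixed, while $(p-2,1)\leftrightarrow(1,p-2)$ and $(p-3,0)\leftrightarrow(0,p-3)$ are swapped. For each pair I apply the duality isomorphism to the formulas in Corollary~\ref{c2}, using the self-dualities $k^*\cong k$ and $L(1,1)^*\cong L(1,1)$ (since $-w_0(1,1)=(1,1)$), the cross-dualities $L(1,0)^*\cong L(0,1)$ and $L(0,1)^*\cong L(1,0)$, and the commutation of the Frobenius twist with dualization. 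For instance, case~$(f)$ of Corollary~\ref{c2} gives $H^8(\mathfrak{g},H^0(p-2,p-2))\cong k$, $H^7\cong k$, $H^5\cong L(1,1)^{(1)}\oplus k$, $H^4\cong 2L(1,1)^{(1)}\oplus k$, and $H^3\cong L(1,1)^{(1)}$; dualizing and re-indexing by $n=8-m$ recovers case~$(f)$ of Corollary~\ref{c3} exactly. The other cases are handled in the same way, and the ``Otherwise $H^n(\mathfrak{g},V)=0$'' clause follows at once because $\lambda\mapsto -w_0\lambda$ preserves the set of restricted weights contributing nonzero cohomology in Corollary~\ref{c2}.

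The main obstacle, and essentially the only nontrivial point, is justifying $G$-equivariant Poincar\'e duality in positive characteristic. This reduces to verifying that $\Lambda^8\mathfrak{g}^*$ is trivial both as a $\mathfrak{g}$-module (unimodularity, automatic for semisimple $\mathfrak{g}$ in any characteristic) and as a $G$-module (vanishing of the determinant of the adjoint action of $G={\rm SL}_3(k)$). Once these facts are in hand, the Chevalley--Eilenberg differential is compatible with the self-pairing of $\Lambda^\bullet\mathfrak{g}^*$ in the standard way, and the argument goes through verbatim from the characteristic-zero case; no further computation is required beyond the already-proved Corollary~\ref{c2}.
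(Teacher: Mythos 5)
Your proposal is correct and is essentially the paper's own argument: the paper deduces Corollary~\ref{c3} in one line by applying the duality isomorphism \eqref{is1}, $H^n(\mathfrak{g},M^*)\cong\big(H^{8-n}(\mathfrak{g},M)\big)^*$, together with $V(\lambda)^*\cong H^0(-w_0\lambda)$ and Corollary~\ref{c2}, exactly as you do. The only difference is that the paper simply cites Hazewinkel's duality theorem \cite{H70} (valid here since $\operatorname{Tr}(\operatorname{ad}_x)=0$) rather than re-justifying the $G$-equivariant Chevalley--Eilenberg pairing, so your extra verification is harmless but not needed.
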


\section{Preliminary facts}%\label{s2}

\subsection{Properties of cohomology}%\label{ss21}

In this section, we give some properties of the cohomology for the Lie algebra $\mathfrak{g}$ that are used to prove the main results.
Cohomology
\begin{gather*}
H^{\bullet}({\mathfrak{g}},M)=\bigoplus_{n\geq0}H^n({\mathfrak{g}},M)
\end{gather*}
can be computed using a complex $\big(\bigwedge^{\bullet}{\mathfrak{g}}^*\bigotimes M,d\big)$, see \cite[Section~I.9.17]{Jan03}. Therefore, we can identify the space of cochains $C^n({\mathfrak{g}},M)$ with the space $\bigwedge^n{\mathfrak{g}}^*\bigotimes M$ and regard the space $C^n({\mathfrak{g}},M)$ as the $G$-module. We decompose the space of cochains $C^{n}(\mathfrak{g},M)$ into a direct sum
of the eigenspaces with respect to the maximal torus $T$ of the group $G$:
\begin{gather*}
C^{n}({\mathfrak{g}},M)=\bigoplus_{\mu\in
X(T)}C^{n}_{\mu}(\mathfrak{g},M),
\end{gather*}
where $X(T)$ is the additive character group of the torus $T$.
Then,
\begin{gather*}
H^n({\mathfrak{g}},M)=\bigoplus_{\mu\in
X(T)}H^n_{\mu}({\mathfrak{g}},M).
\end{gather*}

Denote by $\prod(V)$
the set of weights of the subspace $V$ of the $G$-module $C^{n}(\mathfrak{g},M)$.
Since
\begin{gather*}
\prod(H^n(\mathfrak{g},M))\subseteq
pX(T)\bigcap\prod\Big(\bigwedge{^n}\mathfrak{g}^*\bigotimes M\Big),
\end{gather*}
we will consider
elements of the subspace $\overline{C}^n(\mathfrak{g},M)$ of the space $C^n(\mathfrak{g},M)$ with weights from the set
\begin{gather*}
pX(T)\bigcap\prod\Big(\bigwedge{^n}\mathfrak{g}^*\bigotimes M\Big).
\end{gather*}
The corresponding subspaces of cocycles
and cohomology are denoted by $\overline{Z}^n\!(\mathfrak{g},M)$ and $\overline{H}^n\!(\mathfrak{g},M)$, respectively. Note that
\begin{gather*}
H^n(\mathfrak{g},M)=\overline{H}^n(\mathfrak{g},M).
\end{gather*}
By the definition of $H^n(\mathfrak{g},M)$,
\begin{gather*}\dim H^n(\mathfrak{g},M)=\dim Z^n(\mathfrak{g},M)-\dim B^{n}(\mathfrak{g},M),\end{gather*} and by the definition of $B^{n}(\mathfrak{g},M)$,
\begin{gather*}\dim B^{n}(\mathfrak{g},M)=\dim C^{n-1}(\mathfrak{g},M)-\dim Z^{n-1}(\mathfrak{g},M).\end{gather*} Then, we get
\begin{gather}
\dim H^n(\mathfrak{g},M)=\dim \overline{Z}^n(\mathfrak{g},M)+\dim
\overline{Z}^{n-1}(\mathfrak{g},M)-\dim \overline{C}^{n-1}(\mathfrak{g},M).\label{dim1}
\end{gather}
Since $\operatorname{Tr}(\operatorname{ad}_x)=0$ for all $x\in \mathfrak{g}$, then, according to the main theorem in \cite[p.~639]{H70},
we get the following isomorphism:
\begin{gather}
H^n(\mathfrak{g},M^*)\cong \big(H^{\dim \mathfrak{g}-n}(\mathfrak{g},M)\big)^*.\label{is1}
\end{gather}
The weight subspaces are invariant under the coboundary operator. Therefore, the formula \eqref{dim1} also holds for weight
subspaces:
\begin{gather}
\dim H^n_{\mu}(\mathfrak{g},M)=\dim \overline{Z}^n_{\mu}(\mathfrak{g},M)+\dim
\overline{Z}^{n-1}_{\mu}(\mathfrak{g},M)-\dim \overline{C}^{n-1}_{\mu}(\mathfrak{g},M).\label{dim3}
\end{gather}

\subsection{Peculiar modules}
\label{ss22}
In this section, we describe simple peculiar $\mathfrak{sl}_3(k)$-modules.
Let $\{e_1,e_2,e_3,h_1,h_2,f_1,f_2,f_3\}$ be the Chevalley basis of $\mathfrak{g}$ with the nonzero brackets
\begin{gather*}
[e_i,f_i]=h_i,\qquad [h_i,e_i]=2e_i,\qquad [h_i,f_i]=-2f_i,\qquad i=1,2,3,
\\
[h_1,e_2]=-e_2,\qquad [h_1,e_3]=e_3,\qquad [h_2,e_1]=-e_1,\qquad [h_2,e_3]=e_3,
\\
[h_1,f_2]=f_2,\qquad [h_1,f_3]=-f_3,\qquad [h_2,f_1]=f_1,\qquad [h_2,f_3]=-f_3,
\\
[e_1,e_2]=e_3,\qquad [e_3,f_1]=-e_2,\qquad [e_3,f_2]=e_1,
\\
[f_1,f_2]=-f_3,\qquad [e_1,f_3]=-f_2,\qquad [e_2,f_3]=f_1,
\end{gather*}
where $h_3=h_1+h_2$.
It is known (see \cite[p.~145]{DI02}) that there are six simple peculiar $\mathfrak{g}$-modules:
\begin{gather*}
L(0,0),\quad L(p-2,1),\quad L(1,p-2),\quad L(p-3,0),\quad L(0,p-3),\quad L(p-2,p-2).
\end{gather*}
A {\it linear span} of a set $\{v_1,\dots,v_m\}$ of vectors of a vector space $V$ over $k$ is the smallest linear subspace of $V$ that contains the set $\{v_1,\dots,v_m\}$. Let $\langle v_1,\dots,v_m\rangle_{k}$ denote the linear span of the set $\{v_1,\dots,v_m\}$ of vectors of the vector space $V$ over $k$. For a detailed description of the peculiar simple modules, consider the restricted Verma module
\begin{gather*}
W(r,s):=\bigg\langle v_{i,j,t}:=\frac{f_3^tf_2^jf_1^i}{t!j!i!}u_{r,s}\,\bigg|\,0\leq i,j,t\leq p-1\bigg\rangle_{k}
\end{gather*}
with the following action of $\mathfrak{sl}_3(k)$:
\begin{gather*}
e_1v_{i,j,t}=-(j+1)v_{i,j+1,t-1}+(r-i+1)v_{i-1,j,t},
\\
e_2v_{i,j,t}=(s+i-j-t+1)v_{i,j-1,t}+(i+1)v_{i+1,j,t-1},
\\
e_3v_{i,j,t}=(r+s-i-j-t+1)v_{i,j,t-1}+(r-i+1)v_{i-1,j-1,t},
\\
h_1v_{i,j,t}=(r-2i+j-t)v_{i,j,t},
\qquad
h_2v_{i,j,t}=(s+i-2j-t)v_{i,j,t},
\\
f_1v_{i,j,t}=-(t+1)v_{i,j-1,t+1}+(i+1)v_{i+1,j,t},
\\
f_2v_{i,j,t}=(j+1)v_{i,j+1,t},
\qquad
f_3v_{i,j,t}=(t+1)v_{i,j,t+1}.
\end{gather*}

The restricted Verma module $W(r,s)$ has a submodule $I(r,s)$ generated by the vectors $v_{r+1,0,0}$ and $v_{0,s+1,0}$.
The quotient $V(r,s)=W(r,s)/I(r,s)$ is also restricted; let us call it the {\it Weyl module}. In the modular case, the term ``Weyl module'' was first used in \cite[p.~321]{S80}, see also \cite[p.~59]{F88}. For groups of Lie type over a field of positive characteristic, the term ``Weyl module'' has also been used for a long time (see, for example, \cite[p.~213]{CaL74}, \cite[p.~262]{Humph80}, \cite[p.~291]{Jan80}). Obviously, for the Weyl module $V(r,s)$, the following relations hold:
\begin{gather*}
v_{r+1,0,0}=0,\qquad v_{0,s+1,0}=0.
\end{gather*}

The submodule structure of $V(r,s)$ is well-known, see \cite[p.~484]{B67}, \cite[pp.~151 and~157]{Rud78}. The results of these papers say that the quotient of $V(r,s)$ by the
maximal submodule is the restricted simple module isomorphic to $L(r,s)$. In~particular, for the peculiar simple $\mathfrak{g}$-modules we get
\begin{gather*}
L(0,0)=V(0,0),\qquad L(p-3,0)=V(p-3,0),\qquad L(0,p-3)=V(0,p-3),
\\
L(p-2,1)=V(p-2,1)/L(p-3,0),\qquad L(1,p-2)=V(1,p-2)/L(0,p-3),
\\
L(p-2,p-2)=V(p-2,p-2)/L(0,0).
\end{gather*}
To describe simple modules, we will use the basis vectors of the corresponding restricted Verma modules. The maximal submodules of these Weyl modules are generated by the highest weight vectors
\begin{gather*}
w_{p-3,0}=v_{1,1,0}-2v_{0,0,1}\qquad \text{for}\quad V(p-2,1),
\\
w_{0,p-3}=v_{1,1,0}+v_{0,0,1}\qquad \text{for}\quad V(1,p-2),
\\
w_{0,0}=\sum_{i=0}^{p-2}(p-2-i)i!v_{p-2-i,p-2-i,i}\qquad \text{for}\quad V(p-2,p-2),
\end{gather*}
respectively.
Then, for simple non-trivial peculiar modules, we obtain the following descriptions in terms of the basis vectors of the restricted Verma module:
\begin{gather*}
L(p-3,0)=\langle v_{i,j,t}\mid 0\leq i\leq p-3,\,0\leq j\leq i,\,0\leq t\leq p-3-i\rangle_{k},
\\
L(0,p-3)=\langle v_{i,j,t}\mid 0\leq i\leq j,\,0\leq j\leq p-3,\,0\leq t\leq p-3-j\rangle_{k},
\\
L(p-2,1)=\langle v_{i,j,t}\mid 0\leq i\leq p-2,\,0\leq j\leq i+1,\,0\leq t\leq p-1-i;\,w_{p-3,0}=0\rangle_{k},
\\
L(1,p-2)=\langle v_{i,j,t}\mid 0\leq i\leq j+1,\,0\leq j\leq p-2,\,0\leq t\leq p-1-j;\,w_{0,p-3}=0\rangle_{k},
\\
L(p-2,p-2)=\langle v_{i,j,t}\mid 0\leq i\leq p-2,\,0\leq j\leq p-2,\,0\leq t\leq p-2-i-j;\,w_{0,0}=0\rangle_{k}.
\end{gather*}

\section{Proof of Theorem~\ref{th1}}%\label{s3}

As noted above, there are only six peculiar simple modules. Let us prove the theorem for each peculiar simple module separately.

 $(a)$ Since $p>3$, then the Killing form on $\mathfrak{g}$ is non-degenerate. Then, for the trivial one-dimensional module $M=L(0,0)$, the result obtained earlier for zero characteristic remains true in our case as well. So, we consider only non-trivial peculiar simple modules.

$(b)$ Let $M=L(p-2,1)$.

\begin{Lemma}\label{l1}
Let $\mathfrak{g}=\mathfrak{sl}_3(k)$ and $M=L(p-2,1)$. Then, $H^n(\mathfrak{g},M)=0$, except for the following cases:
\begin{enumerate}\itemsep=0pt
\item[$(i)$] $H^1(\mathfrak{g},L(p-2,1))\cong H^7(\mathfrak{g},L(p-2,1))\cong L(1,0)^{(1)}$,
\item[$(ii)$] $H^4(\mathfrak{g},L(p-2,1))\cong 2L(1,0)^{(1)}$.
\end{enumerate}
\end{Lemma}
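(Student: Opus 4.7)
The plan is to compute $H^n(\mathfrak{g}, L(p-2,1))$ directly from the Chevalley--Eilenberg complex $\bigl(\bigwedge^{\bullet}\mathfrak{g}^*\otimes M,\, d\bigr)$, exploiting the fact that only weights in $pX(T)$ can contribute to cohomology. First I would enumerate the candidate weights: since $H^n(\mathfrak{g},M)$ decomposes into Frobenius twists of restricted simple $G$-modules, and since the set of weights occurring in $\bigwedge^{\bullet}\mathfrak{g}^{*}\otimes L(p-2,1)$ is bounded, its intersection with $pX(T)$ is a short finite list. For the expected composition factor $L(1,0)^{(1)}$, whose highest weight is $p\omega_1$, one has to produce a weight vector in $\bigwedge^{n}\mathfrak{g}^{*}\otimes L(p-2,1)$ of weight $p\omega_1$; subtracting the highest weight $(p-2)\omega_1+\omega_2$ of $M$ leaves $2\omega_1-\omega_2=\alpha_1$, which must be found inside $\bigwedge^{n}\mathfrak{g}^*$. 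This immediately restricts attention to a handful of $(n,\mu)$-pairs.

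Second, I would apply the duality isomorphism~\eqref{is1}, $H^n(\mathfrak{g},M^{*})\cong H^{8-n}(\mathfrak{g},M)^{*}$, together with $L(p-2,1)^{*}\cong L(1,p-2)$, to pair each degree $n\ge 5$ of this lemma with a degree $8-n\le 3$ of the companion lemma for $L(1,p-2)$. In practice I would run the computations for both $L(p-2,1)$ and $L(1,p-2)$ in parallel for $n\le 4$, and fill in $n\ge 5$ by duality; note that the self-dual middle degree $n=4$ with its prescribed multiplicity $2L(1,0)^{(1)}$ must be verified directly rather than obtained by symmetry.

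For the explicit work I would use the basis $\{v_{i,j,t}\}$ of $L(p-2,1)$ from Section~\ref{ss22} modulo the relation $w_{p-3,0}=v_{1,1,0}-2v_{0,0,1}=0$, together with the dual Chevalley basis on $\mathfrak{g}^*$. For each surviving weight $\mu\in pX(T)$ I would list the monomials of weight $\mu$ in $\overline{C}^{n}_{\mu}(\mathfrak{g},M)$, write out the differential using the bracket relations given in Section~\ref{ss22} and the $\mathfrak{g}$-action on $L(p-2,1)$, compute $\dim\overline{Z}^{n}_{\mu}$ by kernel calculation, and then apply the weightwise dimension formula~\eqref{dim3}
\[
\dim H^{n}_{\mu}(\mathfrak{g},M)=\dim\overline{Z}^{n}_{\mu}(\mathfrak{g},M)+\dim\overline{Z}^{n-1}_{\mu}(\mathfrak{g},M)-\dim\overline{C}^{n-1}_{\mu}(\mathfrak{g},M).
\]
To upgrade the dimensional conclusion to the asserted $G$-module isomorphism, I would exhibit explicit highest-weight cocycles of weight $p\omega_1$ that are not coboundaries; since $L(1,0)^{(1)}$ is the unique simple $G$-module with that highest weight, the composition factor is then identified.

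The main obstacle is the middle degree $n=4$: the cochain space $\bigwedge^{4}\mathfrak{g}^{*}\otimes L(p-2,1)$ has the largest dimension, several weights $\mu\in pX(T)$ compete, and a genuine multiplicity of two (rather than one) has to be produced at weight $p\omega_1$ by finding two linearly independent cocycles. A useful cross-check is the Euler-characteristic identity $\sum_n(-1)^{n}\operatorname{ch}H^{n}(\mathfrak{g},M)=\operatorname{ch}(M)\cdot\sum_n(-1)^{n}\operatorname{ch}\bigl(\bigwedge^{n}\mathfrak{g}^{*}\bigr)$, which pins down the total character but not its degreewise distribution. A secondary difficulty is the vanishing in degrees $2,3,5,6$: every cocycle whose weight lies in $pX(T)$ must be shown to be a coboundary, which reduces to a rank computation in the restricted subcomplex $\overline{C}^{\bullet}(\mathfrak{g},M)$, organized weight by weight.
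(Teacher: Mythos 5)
Your proposal is correct and follows essentially the same route as the paper: restrict the Chevalley--Eilenberg complex to the weights in $pX(T)$ (here $\{p\omega_1,\,p(-\omega_1+\omega_2),\,-p\omega_2\}$, with only the dominant weight $p\omega_1$ determining composition factors), list explicit cochain bases, solve the cocycle conditions as linear systems, and apply the dimension formula~\eqref{dim3} degree by degree. The only cosmetic differences are that the paper imports $H^1$ and $H^2$ from the literature instead of recomputing them, and deploys the duality~\eqref{is1} to derive the whole companion lemma for $L(1,p-2)$ rather than to shortcut the degrees $n\ge 5$ within this one.
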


\begin{proof}
Obviously, $H^0(\mathfrak{g},L(p-2,1))=0$. It is also known that $H^1(\mathfrak{g},L(p-2,1))\cong L(1,0)^{(1)}$, see \cite[p.~301]{Jan91}, and $H^2(\mathfrak{g},L(p-2,1))=0$, see \cite[Theorem~1.1]{DI02}.

Now we show that $H^3(\mathfrak{g},L(p-2,1))=0$. We get
\begin{gather*}
\prod\big(\overline{C}^{\bullet}(\mathfrak{g},L(p-2,1))\big)=\{p\omega_1, p(-\omega_1+\omega_2), -p\omega_2\}.
\end{gather*}
The subspace $\overline{C}^2(\mathfrak{g},L(p-2,1))$ is $21$-dimensional and its set of weights consists of three ele\-ments~$p\omega_1$, $p(-\omega_1+\omega_2)$, $-p\omega_2$. We have
\begin{align*}
\dim \overline{C}^2_{p\omega_1}(\mathfrak{g},L(p-2,1))
&=\dim \overline{C}^2_{p(-\omega_1+\omega_2)}(\mathfrak{g},L(p-2,1))
\\
&=\dim \overline{C}^2_{-p\omega_2}(\mathfrak{g},L(p-2,1))=7.
\end{align*}

The subspace $\overline{C}^2_{p\omega_1}(\mathfrak{g},L(p-2,1))$ is spanned by the $2$-cochains
\begin{gather*}
\psi_1^2=h_1^*\wedge f_1^*\otimes v_{0,0,0},\qquad
\psi_2^2=h_2^*\wedge f_1^*\otimes v_{0,0,0},\qquad
\psi_3^2=e_2^*\wedge f_3^*\otimes v_{0,0,0},
\\
\psi_4^2=h_1^*\wedge f_3^*\otimes v_{0,1,0},\qquad
\psi_5^2=h_2^*\wedge f_3^*\otimes v_{0,1,0},
\\
\psi_6^2=f_1^*\wedge f_2^*\otimes v_{0,1,0},\qquad
\psi_7^2=f_1^*\wedge f_3^*\otimes v_{0,0,1}.
\end{gather*}
Let $\sum_{i=1}^7a_i\psi_i^2\in Z^2(\mathfrak{g},L(p-2,1))$, where $a_i\in k$ for all $i$. Then, by the cocycle condition,
\begin{gather*}
a_1=a_2=a_4=a_5=0,\qquad
a_3=a_6=a_7.
\end{gather*}
This means that $\dim Z^2_{p\omega_1}(\mathfrak{g},L(p-2,1))=1$.

The subspace $\overline{C}^3_{p\omega_1}(\mathfrak{g},L(p-2,1))$ is spanned by the $3$-cochains
\begin{alignat*}{3}
&\psi_1^3=h_1^*\wedge h_2^*\wedge f_1^*\otimes v_{0,0,0},\qquad&&
\psi_2^3=h_1^*\wedge e_2^*\wedge f_3^*\otimes v_{0,0,0},&
\\
&\psi_3^3=h_2^*\wedge e_2^*\wedge f_3^*\otimes v_{0,0,0},\qquad&&
\psi_4^3=e_2^*\wedge f_1^*\wedge f_1^*\otimes v_{0,0,0},&
\\
&\psi_5^3=e_3^*\wedge f_1^*\wedge f_3^*\otimes v_{0,0,0},\qquad&&
\psi_6^3=e_2^*\wedge f_1^*\wedge f_3^*\otimes v_{1,0,0},&
\\
&\psi_7^3=e_1^*\wedge f_1^*\wedge f_3^*\otimes v_{0,1,0},\qquad&&
\psi_8^3=h_1^*\wedge h_2^*\wedge f_3^*\otimes v_{0,1,0},&
\\
&\psi_9^3=e_2^*\wedge f_2^*\wedge f_3^*\otimes v_{0,1,0},\qquad&&
\psi_{10}^3=h_2^*\wedge f_1^*\wedge f_2^*\otimes v_{0,1,0},&
\\
&\psi_{11}^3=h_1^*\wedge f_1^*\wedge f_2^*\otimes v_{0,1,0},\qquad&&
\psi_{12}^3=h_1^*\wedge f_1^*\wedge f_3^*\otimes v_{0,0,1},&
\\
&\psi_{13}^3=h_2^*\wedge f_1^*\wedge f_3^*\otimes v_{0,0,1},\qquad&&
\psi_{14}^3=f_1^*\wedge f_2^*\wedge f_3^*\otimes v_{0,1,1}.&
\end{alignat*}
So, $\dim \overline{C}^3_{p\omega_1}(\mathfrak{g},L(p-2,1))=14$. Suppose $\sum_{i=1}^{14}b_i\psi_i^3\in Z^3(\mathfrak{g},L(p-2,1))$, where $b_i\in k$ for all~$i$. Then, using the cocycle condition, we get
\begin{gather*}
b_1=b_8=0,
\qquad
b_2+b_4-b_5-2b_6-b_7=0,
\qquad
-b_2-b_3+b_5+b_7+b_9-2b_{14}=0,
\\
b_{10}-b_3=0,
\qquad
b_{11}-b_2=0,
\qquad
b_{12}-b_2=0,
\qquad
b_{13}-b_3=0.
\end{gather*}
Consider these equalities as a system of equations for $b_i$, where $i=1,\dots,14$. The rank of the matrix of this system is equal to $8$. Therefore, $\dim Z^3_{p\omega_1}(\mathfrak{g},L(p-2,1))=14-8=6$. Then, by~\eqref{dim3},
\begin{align*}
\dim H^3_{p\omega_1}(\mathfrak{g},L(p-2,1))={}&
\dim Z^3_{p\omega_1}(\mathfrak{g},L(p-2,1))+\dim Z^2_{p\omega_1}(\mathfrak{g},L(p-2,1))
\\
&-\dim C^2_{p\omega_1}(\mathfrak{g},L(p-2,1))=6+1-7=0.
\end{align*}
Thus, all $3$-cocycles with dominant highest weight $p\omega_1$ are coboundaries. Therefore,
\begin{gather*}
H^3(\mathfrak{g},L(p-2,1))=0.
\end{gather*}

Now we will calculate $H^4(\mathfrak{g},L(p-2,1))$. The subspace $\overline{C}^4_{p\omega_1}(\mathfrak{g},L(p-2,1))$ is $18$-dimensional and is spanned by the $4$-cochains
\begin{alignat*}{3}
&\psi_1^4=h_1^*\wedge h_2^*\wedge e_2^*\wedge f_3^*\otimes v_{0,0,0},\qquad&&
\psi_2^4=h_1^*\wedge e_2^*\wedge f_1^*\wedge f_2^*\otimes v_{0,0,0},&
\\
&\psi_3^4=h_2^*\wedge e_2^*\wedge f_1^*\wedge f_2^*\otimes v_{0,0,0},\qquad&&
\psi_4^4=h_1^*\wedge e_3^*\wedge f_1^*\wedge f_3^*\otimes v_{0,0,0},&
\\
&\psi_5^4=h_2^*\wedge e_3^*\wedge f_1^*\wedge f_3^*\otimes v_{0,0,0},\qquad&&
\psi_6^4=e_1^*\wedge e_2^*\wedge f_1^*\wedge f_3^*\otimes v_{0,0,0},&
\\
&\psi_7^4=h_1^*\wedge e_2^*\wedge f_1^*\wedge f_3^*\otimes v_{1,0,0},\qquad&&
\psi_8^4=h_2^*\wedge e_2^*\wedge f_1^*\wedge f_3^*\otimes v_{1,0,0},&
\\
&\psi_9^4=h_1^*\wedge h_2^*\wedge f_1^*\wedge f_2^*\otimes v_{1,0,0},\qquad&&
\psi_{10}^4=h_1^*\wedge e_1^*\wedge f_1^*\wedge f_3^*\otimes v_{0,1,0},&
\\
&\psi_{11}^4=h_2^*\wedge e_1^*\wedge f_1^*\wedge f_3^*\otimes v_{0,1,0},\qquad&&
\psi_{12}^4=h_1^*\wedge e_2^*\wedge f_2^*\wedge f_3^*\otimes v_{0,1,0},&
\\
&\psi_{13}^4=h_2^*\wedge e_2^*\wedge f_2^*\wedge f_3^*\otimes v_{0,1,0},\qquad&&
\psi_{14}^4=e_3^*\wedge f_1^*\wedge f_2^*\wedge f_3^*\otimes v_{0,1,0},&
\\
&\psi_{15}^4=h_1^*\wedge h_2^*\wedge f_1^*\wedge f_3^*\otimes v_{0,0,1},\qquad&&
\psi_{16}^4=e_2^*\wedge f_1^*\wedge f_2^*\wedge f_3^*\otimes v_{0,0,1},&
\\
&\psi_{17}^4=h_1^*\wedge f_1^*\wedge f_2^*\wedge f_3^*\otimes v_{0,1,1},\qquad&&
\psi_{18}^4=h_2^*\wedge f_1^*\wedge f_2^*\wedge f_3^*\otimes v_{0,1,1}.&
\end{alignat*}
Suppose $\sum_{i=1}^{18}c_i\psi_i^4\in Z^4(\mathfrak{g},L(p-2,1))$, where $c_i\in k$ for all $i$. Then, using the cocycle condition, we get
\begin{gather*}
c_1=c_9=c_{15},
\qquad
c_1-c_3+c_5+2c_8+c_{11}=0,
\qquad
c_2+c_3+c_5-c_6-c_{14}-c_{16}=0,
\\
c_2-2c_7+c_{12}-c_{15}-2c_{17}=0,
\qquad
c_3-2c_8+c_{13}-2c_{18}=0,
\\
c_4-c_2+2c_7+c_{10}=0,
\qquad
c_4-c_9+c_{10}+c_{12}-2c_{17}=0,
\\
c_5+c_9+c_{11}+c_{13}-2c_{18}=0,
\qquad
c_6-c_{11}-c_{12}+c_{14}+c_{16}=0.
\end{gather*}
The rank of the matrix of this system is equal to $8$. Therefore,
\begin{gather*}
\dim Z^4_{p\omega_1}(\mathfrak{g},L(p-2,1))=18-8=10.
\end{gather*}
Then, by \eqref{dim3},
\begin{align*}
\dim H^4_{p\omega_1}(\mathfrak{g},L(p-2,1))={}&
\dim Z^4_{p\omega_1}(\mathfrak{g},L(p-2,1))+\dim Z^3_{p\omega_1}(\mathfrak{g},L(p-2,1))
\\
&-\dim C^3_{p\omega_1}(\mathfrak{g},L(p-2,1))=10+6-14=2.
\end{align*}
Thus, $H^4(\mathfrak{g},L(p-2,1))$ is generated by the two cohomological classes with weight $p\omega_1$. So, $H^4(\mathfrak{g},L(p-2,1))\cong 2L(1,0)^{(1)}$.

Similar calculations give us
\begin{alignat*}{3}
&\dim C^5_{p\omega_1}(\mathfrak{g},L(p-2,1))=14, \qquad&&\dim Z^5_{p\omega_1}(\mathfrak{g},L(p-2,1))=8,&
\\
&\dim C^6_{p\omega_1}(\mathfrak{g},L(p-2,1))=7, \qquad&&\dim Z^6_{p\omega_1}(\mathfrak{g},L(p-2,1))=6,&
\\
&\dim C^7_{p\omega_1}(\mathfrak{g},L(p-2,1))=2, \qquad&&\dim Z^7_{p\omega_1}(\mathfrak{g},L(p-2,1))=2.&
\end{alignat*}
Then, using \eqref{dim3}, we get
\begin{gather*}
\dim H^5_{p\omega_1}(\mathfrak{g},L(p-2,1))=0,\qquad
\dim H^6_{p\omega_1}(\mathfrak{g},L(p-2,1))=0,
\\
\dim H^7_{p\omega_1}(\mathfrak{g},L(p-2,1))=1.
\end{gather*}
This completes the proof of the lemma.
\end{proof}

$(c)$ Let $M=L(1,p-2)$. Obviously, $M$ is dual to $L(p-2,1)$. Then, using \eqref{is1} and Lemma~\ref{l1}, we get the following

\begin{Lemma}%\label{l2}
Let $\mathfrak{g}=\mathfrak{sl}_3(k)$ and $M=L(1,p-2)$. Then, $H^n(\mathfrak{g},M)=0$, except for the following cases:

\begin{enumerate}\itemsep=0pt

\item[$(i)$] $H^1(\mathfrak{g},L(1,p-2))\cong H^7(\mathfrak{g},L(1,p-2))\cong L(0,1)^{(1)}$,

\item[$(ii)$] $H^4(\mathfrak{g},L(1,p-2))\cong 2L(0,1)^{(1)}$.

\end{enumerate}
\end{Lemma}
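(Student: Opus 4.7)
The statement is a direct corollary of Lemma~\ref{l1} via duality, so the plan is very short. First I would note that $L(1,p-2)$ is isomorphic to $L(p-2,1)^*$: for a simple module of highest weight $\lambda$ one has $L(\lambda)^*\cong L(-w_0\lambda)$, and for $\mathfrak{sl}_3(k)$ the longest Weyl-group element acts on fundamental weights by $-w_0(\omega_1)=\omega_2$, so that $-w_0((p-2)\omega_1+\omega_2)=\omega_1+(p-2)\omega_2$. Thus $L(1,p-2)\cong L(p-2,1)^*$ as $G$-modules (and in particular as $\mathfrak{g}$-modules).

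Next I would apply the Poincaré-type duality isomorphism \eqref{is1}, which uses that $\mathfrak{sl}_3(k)$ is unimodular (trace of $\mathrm{ad}_x$ vanishes) and that $\dim\mathfrak{g}=8$:
\begin{gather*}
H^n(\mathfrak{g},L(1,p-2))\cong H^n(\mathfrak{g},L(p-2,1)^*)\cong\bigl(H^{8-n}(\mathfrak{g},L(p-2,1))\bigr)^*.
\end{gather*}
Plugging the outcome of Lemma~\ref{l1} into the right-hand side, all $H^{8-n}$ vanish except at $8-n\in\{1,4,7\}$, i.e., at $n\in\{1,4,7\}$.

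Finally I would identify the duals. The dual of $L(1,0)$ is $L(0,1)$ by the same $-w_0$ computation, and taking duals commutes with the Frobenius twist, so $(L(1,0)^{(1)})^*\cong L(0,1)^{(1)}$. Substituting gives $H^1\cong H^7\cong L(0,1)^{(1)}$ and $H^4\cong 2L(0,1)^{(1)}$, as required. There is no real obstacle here; the only point demanding care is confirming that the $G$-module structure (not merely the $\mathfrak{g}$-module structure) is preserved throughout, which follows because both the identification $L(1,p-2)\cong L(p-2,1)^*$ and the duality \eqref{is1} are $G$-equivariant.
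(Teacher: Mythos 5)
Your proposal is correct and follows exactly the paper's own route: the paper likewise observes that $L(1,p-2)$ is dual to $L(p-2,1)$ and deduces the lemma from the duality isomorphism~\eqref{is1} together with Lemma~\ref{l1}. Your additional remarks on the $-w_0$ computation and the self-symmetry of the degree set $\{1,4,7\}$ under $n\mapsto 8-n$ just make explicit what the paper leaves implicit.
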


$(d)$ Let $M=L(p-3,0)$.

\begin{Lemma}\label{l3}
Let $\mathfrak{g}=\mathfrak{sl}_3(k)$ and $M=L(p-3,0)$. Then, $H^n(\mathfrak{g},M)=0$, except for the following cases:
\begin{enumerate}\itemsep=0pt

\item[$(i)$] $H^2(\mathfrak{g},L(p-3,0))\cong H^6(\mathfrak{g},L(p-3,0))\cong L(1,0)^{(1)}$,

\item[$(ii)$] $H^3(\mathfrak{g},L(p-3,0))\cong H^5(\mathfrak{g},L(p-3,0))\cong L(1,0)^{(1)}$.
\end{enumerate}
\end{Lemma}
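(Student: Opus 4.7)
The approach is to mirror the proof method of Lemma~\ref{l1}: compute each $H^n(\mathfrak{g},L(p-3,0))$ through formula~\eqref{dim3} at the dominant $p$-multiple weight $p\omega_1$, and then recover the $G$-module structure from $\dim H^n_{p\omega_1}$. A preliminary observation identifies the set $\prod\bigl(\overline{C}^\bullet(\mathfrak{g},L(p-3,0))\bigr)$: since the weights of $L(p-3,0)$ lie in the nontrivial coset $(p-3)\omega_1+Q$ of $X(T)$ modulo the root lattice $Q$ — nontrivial because $p>3$ is prime, so $3\nmid p-3$ — and $\wedge^\bullet\mathfrak{g}^*$ carries only $Q$-weights, the $p$-multiple weights appearing in the cochain complex form the Weyl orbit $\{p\omega_1,\,p(-\omega_1+\omega_2),\,-p\omega_2\}$, precisely the weights of $L(1,0)^{(1)}$. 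Consequently, $H^n(\mathfrak{g},L(p-3,0))$ will be isomorphic as a $G$-module to $(\dim H^n_{p\omega_1})\cdot L(1,0)^{(1)}$, with no trivial or other restricted composition factor possible.

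Several degrees are immediate. $H^0=0$ because $L(p-3,0)$ is a simple nontrivial $\mathfrak{g}$-module, and $H^n=0$ for $n>\dim\mathfrak{g}=8$. From \cite[p.~301]{Jan91}, $H^1(\mathfrak{g},L(p-3,0))=0$, since $(p-3,0)$ is not one of the two exceptional highest weights giving nonzero $H^1$. From \cite[Theorem~1.1]{DI02}, $H^2(\mathfrak{g},L(p-3,0))\cong L(1,0)^{(1)}$, which already confirms the $n=2$ case of the lemma.

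For the remaining degrees $n=3,4,5,6,7$ one follows the template of Lemma~\ref{l1}: write down an explicit basis of $\overline{C}^n_{p\omega_1}(\mathfrak{g},L(p-3,0))$ consisting of wedges of Chevalley cobasis elements of $\mathfrak{g}^*$ tensored with vectors $v_{i,j,t}$ from the description $L(p-3,0)=\langle v_{i,j,t}\mid 0\le i\le p-3,\,0\le j\le i,\,0\le t\le p-3-i\rangle_k$ recalled in Section~\ref{ss22}; impose the Chevalley--Eilenberg cocycle condition using the $\mathfrak{g}$-action formulas on $v_{i,j,t}$; compute the rank of the resulting homogeneous linear system in the cochain coefficients to obtain $\dim\overline{Z}^n_{p\omega_1}$; and apply~\eqref{dim3}. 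The predicted outcomes are $\dim H^n_{p\omega_1}=1$ for $n=3,5,6$ and $\dim H^n_{p\omega_1}=0$ for $n=4,7$, which together with the reduction above yield the lemma.

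The main obstacle is the combinatorial bookkeeping at these middle degrees. Because $L(p-3,0)$ has dimension $(p-1)(p-2)/2$, many more basis vectors $v_{i,j,t}$ contribute to the weight-$p\omega_1$ subspace of $\wedge^n\mathfrak{g}^*\otimes L(p-3,0)$ than appeared in Lemma~\ref{l1}. Correctly enumerating every cochain $\psi_I\otimes v_{i,j,t}$ of weight $p\omega_1$, and expanding $d(\psi_I\otimes v_{i,j,t})$ via the explicit Chevalley action on $v_{i,j,t}$, is the heart of the proof. The rank computation at $n=4$ is the most delicate: one must verify that every $p\omega_1$-cocycle is exhausted by coboundaries, so that $\dim H^4_{p\omega_1}=0$.
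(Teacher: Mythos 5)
Your proposal follows essentially the same route as the paper: restrict to the weight-$p\omega_1$ subspace of the cochain complex, impose the cocycle condition, and apply formula~\eqref{dim3}, and your predicted dimensions ($1$ in degrees $3,5,6$ and $0$ in degrees $4,7$) agree with the paper's computation. One reassurance on the step you flag as the main obstacle: the bookkeeping is in fact far lighter than in Lemma~\ref{l1}, not heavier, because the only basis vector of $L(p-3,0)$ that can pair with a wedge to land in weight $p\omega_1$ is $v_{0,0,0}$ (one would need a weight of $\bigwedge^{\bullet}\mathfrak{g}^*$ with $\alpha_1$-coefficient at least $2$ and the remaining weights of $L(p-3,0)$ force coefficient $\geq 3$, which is impossible), so $\dim\overline{C}^n_{p\omega_1}(\mathfrak{g},L(p-3,0))\leq 2$ in every degree.
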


\begin{proof}
It is easy to see that
\begin{gather*}
\prod\big(\overline{C}^{\bullet}(\mathfrak{g},L(p-3,0))\big)=
\{p\omega_1, p(-\omega_1+\omega_2), -p\omega_2\}.
\end{gather*}
Then, it is obvious that
\begin{gather*}
\prod\big(\overline{C}^i(\mathfrak{g},L(p-3,0))\big)\bigcap \prod\big(\overline{C}^*(\mathfrak{g},L(p-3,0))\big)=\varnothing
\qquad\text{for}\quad i=1,2.
\end{gather*}
Therefore,
\begin{gather*}
H^0(\mathfrak{g},L(p-3,0))=0\qquad\text{and}\qquad H^1(\mathfrak{g},L(p-3,0))=0.
\end{gather*}

Further, we get
\begin{gather*}
\prod\big(\overline{C}^2(\mathfrak{g},L(p-3,0))\big)=\{p\omega_1, p(-\omega_1+\omega_2), -p\omega_2\}.
\end{gather*}
 Any composition factor of $H^2(\mathfrak{g},L(p-3,0))$, as a $G$-module, is uniquely determined by its highest weight. The highest weight of a simple $G$-module is dominant, see~\cite[p.~260]{Humph80}. Recall that the weight $\lambda=r\omega_1+s\omega_2$ is dominant if $r\geq 0$ and $s\geq 0$. Then, $H^2(\mathfrak{g},L(p-3,0))$ can be generated only by the classes of cocycles with dominant weight $p\omega_1$. Therefore, it is sufficient to determine the multiplicity of $p\omega_1$. Consider the subspace of $2$-cochains with dominant weight~$p\omega_1$. The subspace
$\overline{C}^2_{p\omega_1}(\mathfrak{g},L(p-3,0))$ is one-dimensional and is spanned by the 2-cochain $\psi^2=f_1^*\wedge f_3^*\otimes v_{0,0,0}$. It is easy to see that $\psi^2$ is a 2-cocycle. Since $\overline{C}^1_{p\omega_1}(\mathfrak{g},L(p-3,0))\allowbreak=0$, it follows that $\psi^2$ cannot be a coboundary. Therefore, $H^2(\mathfrak{g},L(p-3,0))$, as a $G$-module, is generated by the class $\big[\psi^2\big]$ of $2$-cocycles with weight $p\omega_1$ and is isomorphic to $L(1,0)^{(1)}$.

The set of weights of the subspace $\prod\big(\overline{C}^3(\mathfrak{g},L(p-3,0))\big)$ is also equal to $\{p\omega_1,p(-\omega_1+\omega_2),\allowbreak -p\omega_2\}$. The subspace
$\overline{C}^3_{p\omega_1}(\mathfrak{g},L(p-3,0))$ is two-dimensional and is spanned by the $3$-cochains
\begin{gather*}
\psi^3_1=h_1^*\wedge f_1^*\wedge f_3^*\otimes v_{0,0,0},\qquad\psi^3_2=h_2^*\wedge f_1^*\wedge f_3^*\otimes v_{0,0,0}.
\end{gather*}
If $a_1\psi^3_1+a_2\psi^3_2$ is a $3$-cocycle, then it follows from the cocycle condition that $a_2=0$. Since
\begin{gather*}
\dim \overline{C}^2_{p\omega_1}(\mathfrak{g},L(p-3,0))=\dim \overline{Z}^2_{p\omega_1}(\mathfrak{g},L(p-3,0))=1,
\end{gather*}
by~\eqref{dim3}, we see that $\dim H^3_{p\omega_1}(\mathfrak{g},L(p-3,0))=1+1-1=1$. Therefore, $H^3(\mathfrak{g},L(p-3,0))$, as a $G$-module, is generated by the class $\big[\psi^3_1\big]$ of $3$-cocycles with weight $p\omega_1$ and is isomorphic to $L(1,0)^{(1)}$.

 Now, we will prove that $H^4(\mathfrak{g},L(p-3,0))=0$. The weight subspace
$\overline{C}^4_{p\omega_1}(\mathfrak{g},L(p-3,0))$ is two-dimensional and is spanned by the $4$-cochains
\begin{gather*}
\psi^4_1=h_1^*\wedge h_2^*\wedge f_1^*\wedge f_3^*\otimes v_{0,0,0},\qquad\psi^4_2=e_2^*\wedge f_1^*\wedge f_2^*\wedge f_3^*\otimes v_{0,0,0}.
\end{gather*}
If $b_1\psi^4_1+b_2\psi^4_2$
is a $4$-cocycle, then it follows from the cocycle condition that $b_1=0$. Since
\begin{gather*}
\dim \overline{C}^3_{p\omega_1}(\mathfrak{g},L(p-3,0))=2\qquad\text{and}\qquad
\dim \overline{Z}^3_{p\omega_1}(\mathfrak{g},L(p-3,0))=1,
\end{gather*}
by~\eqref{dim3}, it follows that
\begin{gather*}
\dim H^4_{p\omega_1}(\mathfrak{g},L(p-3,0))=1+1-2=0.
\end{gather*}
Therefore,
\begin{gather*}
\dim H^4(\mathfrak{g},L(p-3,0))=\dim H^4_{p\omega_1}(\mathfrak{g},L(p-3,0))=0.
\end{gather*}

The weight subspace
$\overline{C}^5_{p\omega_1}(\mathfrak{g},L(p-3,0))$ is two-dimensional and is spanned by the $5$-cochains
\begin{gather*}
\psi^5_1=h_1^*\wedge e_2^*\wedge f_1^*\wedge f_2^*\wedge f_3^*\otimes v_{0,0,0},\qquad\psi^5_2=h_2^*\wedge e_2^*\wedge f_1^*\wedge f_2^*\wedge f_3^*\otimes v_{0,0,0}.
\end{gather*}
It follows from the cocycle condition that $c_1\psi^5_1+c_2\psi^5_2$ is a $5$-cocycle for any $c_1,c_2\in k$. So, $\dim \overline{Z}^5_{p\omega_1}(\mathfrak{g},L(p-3,0))=2$. Since
\begin{gather*}
\dim \overline{C}^4_{p\omega_1}(\mathfrak{g},L(p-3,0))=2\qquad\text{and}\qquad
\dim \overline{Z}^4_{p\omega_1}(\mathfrak{g},L(p-3,0))=1,
\end{gather*}
then by \eqref{dim3}, we see that
\begin{gather*}
\dim H^5_{p\omega_1}(\mathfrak{g},L(p-3,0))=2+1-2=1.
\end{gather*}
Therefore, $H^5(\mathfrak{g},L(p-3,0))$, as a $G$-module, is generated by the class $\big[\psi^5_1\big]$ of $5$-cocycles with weight $p\omega_1$ and is isomorphic to $L(1,0)^{(1)}$.

The weight subspace
$\overline{C}^6_{p\omega_1}(\mathfrak{g},L(p-3,0))$ is one-dimensional and is spanned by the $6$-cochain
\begin{gather*}
\psi^6=h_1^*\wedge h_2^*\wedge e_2^*\wedge f_1^*\wedge f_2^*\wedge f_3^*\otimes v_{0,0,0}.
\end{gather*}
It follows from the cocycle condition that $\psi^6$ is a $6$-cocycle. So, $\dim \overline{Z}^6_{p\omega_1}(\mathfrak{g},L(p-3,0))=1$. Since
\begin{gather*}
\dim \overline{C}^5_{p\omega_1}(\mathfrak{g},L(p-3,0))=2\qquad\text{and}\qquad
\dim \overline{Z}^5_{p\omega_1}(\mathfrak{g},L(p-3,0))=2,
\end{gather*}
then by~\eqref{dim3},
\begin{gather*}
\dim H^6_{p\omega_1}(\mathfrak{g},L(p-3,0))=1.
\end{gather*}
Therefore, $H^6(\mathfrak{g},L(p-3,0))$, as a $G$-module, is generated by the class $[\psi^6]$ of $6$-cocycles with weight $p\omega_1$ and is isomorphic to $L(1,0)^{(1)}$.

Finally, the subspaces $\overline{C}^7(\mathfrak{g},L(p-3,0))$ and $\overline{C}^8(\mathfrak{g},L(p-3,0))$ are trivial, therefore,
\begin{equation*}
H^7(\mathfrak{g},L(p-3,0))=0\qquad\text{and}\qquad H^8(\mathfrak{g},L(p-3,0))=0.
\end{equation*}
\renewcommand{\qed}{}
\end{proof}

$(e)$ Let $M=L(0,p-3)$. Obviously, $M$ is dual to $L(p-3,0)$. Then, using \eqref{is1} and Lemma~\ref{l3}, we get the following

\begin{Lemma}\label{l4}
Let $\mathfrak{g}=\mathfrak{sl}_3(k)$ and $M=L(0,p-3)$. Then, $H^n(\mathfrak{g},M)=0$, except for the following cases:
\begin{enumerate}\itemsep=0pt

\item[$(i)$] $H^2(\mathfrak{g},L(0,p-3))\cong H^6(\mathfrak{g},L(0,p-3))\cong L(0,1)^{(1)}$,

\item[$(ii)$] $H^3(\mathfrak{g},L(0,p-3))\cong H^5(\mathfrak{g},L(0,p-3))\cong L(0,1)^{(1)}$.
\end{enumerate}
\end{Lemma}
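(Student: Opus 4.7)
The approach is essentially immediate from duality. Since the highest weight of the dual of a simple $G$-module $L(r,s)$ is $-w_0(r\omega_1+s\omega_2) = s\omega_1 + r\omega_2$, the module $L(0,p-3)$ is isomorphic to $L(p-3,0)^*$. The duality isomorphism \eqref{is1} then gives, for all $n$,
\begin{equation*}
H^n(\mathfrak{g}, L(0,p-3)) \cong H^n(\mathfrak{g}, L(p-3,0)^*) \cong \bigl(H^{8-n}(\mathfrak{g}, L(p-3,0))\bigr)^*,
\end{equation*}
since $\dim \mathfrak{g} = 8$.

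Plugging in the values from Lemma \ref{l3}, the only nonzero cohomology on the right-hand side occurs for $8-n \in \{2,3,5,6\}$, i.e.\ $n\in\{2,3,5,6\}$, and in each case equals $L(1,0)^{(1)}$. Taking duals and using that the dual of $L(1,0)^{(1)}$ is $\bigl(L(1,0)^*\bigr)^{(1)} = L(0,1)^{(1)}$ (dualising commutes with Frobenius twist on rational $G$-modules), one obtains exactly the four isomorphisms claimed in $(i)$ and $(ii)$, with all other degrees vanishing.

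There is no genuine obstacle here: the only thing to check is that the hypotheses of \eqref{is1} apply (which they do, since $\operatorname{Tr}(\operatorname{ad}_x)=0$ on $\mathfrak{sl}_3(k)$) and that duality behaves correctly with respect to Frobenius twists and the involution $L(r,s)\mapsto L(s,r)$. So the lemma follows from Lemma~\ref{l3} in one line of bookkeeping, and no new cocycle computations are needed.
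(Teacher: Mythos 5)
Your proposal is correct and follows exactly the paper's own argument: the paper likewise observes that $L(0,p-3)$ is dual to $L(p-3,0)$ and deduces the lemma from the duality isomorphism \eqref{is1} together with Lemma~\ref{l3}. Your additional bookkeeping (that $8-n$ ranges over $\{2,3,5,6\}$ and that the dual of $L(1,0)^{(1)}$ is $L(0,1)^{(1)}$) is accurate and simply makes explicit what the paper leaves implicit.
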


$(f)$ Finally, let $M=L(p-2,p-2)$. In~this case, we will use some properties of the connection between ordinary and restricted cohomologies. The restricted cohomolology of a restricted Lie algebra with coefficients in a restricted module was introduced by Hochschild in \cite[p.~561]{Hoch54}. The restricted $n$-cohomology of $\mathfrak{g}$ with coefficients in a restricted $\mathfrak{g}$-module $V$ is denoted by~$H^n_{\rm res}(\mathfrak{g},V)$.

For $M=L(p-2,p-2)$, there is the following short exact sequence of $\mathfrak{g}$-modules:
\begin{gather}
0\longrightarrow M\longrightarrow H^0(p-2,p-2)\longrightarrow k\longrightarrow 0.
\label{eq1}
\end{gather}
If the cohomology of $H^0(p-2,p-2)$ is known, then using the long exact cohomology sequence
\begin{gather}
\cdots\longrightarrow H^n(\mathfrak{g},M)\longrightarrow H^n\big(\mathfrak{g},H^0(p-2,p-2)\big)\longrightarrow H^n(\mathfrak{g})\longrightarrow \cdots,\label{eq12}
\end{gather}
corresponding to the short exact sequence~\eqref{eq1}, we can obtain information about $H^{n}(\mathfrak{g},M)$.
We calculate $H^n\big(\mathfrak{g},H^0(p-2,p-2)\big)$ in two steps.

First, we calculate the restricted cohomology
$H^n_{\rm res}\big(\mathfrak{g},H^0(p-2,p-2)\big)$, using the equivalence of the cohomologies $H^n_{\rm res}\big(\mathfrak{g},H^0(p-2,p-2)\big)$ and $H^n\big(G_1,H^0(p-2,p-2)\big)$, where $G_1$ is the first Frobenius kernel for $G$, see~\cite[Section~I.9.6]{Jan03}, and Andersen--Jantzen formula on cohomology of~$G_1$ with coefficients in $H^0(\lambda)$, see~\cite{AJ84}. Let $p>3$, and $\lambda=w\cdot 0+p\nu$. Then, see \cite[p.~501]{AJ84},
\begin{gather}
H^i\big(G_1,H^0(\lambda)\big)^{(-1)}\cong
\begin{cases}
\operatorname{Ind}_B^{G}\big(S^{(i-l(w))/2}(\mathfrak{u}^*)\otimes k_{\nu}\big)&\text{if} \ i-l(w)\ \text{is even},
\\
0&\text{if} \ i-l(w)\ \text{is odd},
\end{cases}
\label{eq13}
\end{gather}
where $\mathfrak{u}$ is the maximal nilpotent subalgebra of $\mathfrak{g}$, corresponding to the negative roots. The Lie algebra $\mathfrak{u}$ is the Lie algebra of the unipotent radical $U$ of $B$.

Then, to pass to the usual cohomology $H^n\big(\mathfrak{g},H^0(p-2,p-2)\big)$, we use the Friedlander--Parshall--Farnsteiner spectral sequence, see \cite[Section~5]{FP88} and \cite[Theorem~4.1]{F91}. In~\cite[Theo\-rem~4.1]{F91}, choosing the zero ideal as an ideal of a given Lie algebra, we obtain the following spectral sequence for the cohomology of the Lie algebra $\mathfrak{g}$ with coefficients in the $\mathfrak{g}$-module $V$:
\begin{gather*}
\bigoplus_{i+j=n}\operatorname{Hom}_k\Big(\bigwedge{^i}(\mathfrak{g}),H^j_{\rm res}(\mathfrak{g},V)\Big)\Longrightarrow H^n(\mathfrak{g},V).
\end{gather*}
In particular, the following lemma can be directly obtained from the last spectral sequence:

\begin{Lemma}\label{l5}
Let $\mathfrak{g}=\mathfrak{sl}_3(k)$
$p>3$ and $V$ a $\mathfrak{g}$-module. Then
\begin{enumerate}\itemsep=0pt

\item[$(i)$] if
$H^{i}_{\rm res}(\mathfrak{g},V)=0$ for all $i\leq n$, then $H^{i}(\mathfrak{g},V)=0$ for all $i\leq n$,

\item[$(ii)$] if $H^i(\mathfrak{g},V)=0$ for all $i\leq n-2$, then
\begin{gather}
H^{n-1}(\mathfrak{g},V)\cong H^{n-1}_{\rm res}(\mathfrak{g},V) \label{eq14}
\end{gather}
\end{enumerate}
and the following sequence is exact:
\begin{align}
0&\longrightarrow H^{n}_{\rm res}(\mathfrak{g},V)\longrightarrow H^{n}(\mathfrak{g},V)\longrightarrow \operatorname{Hom}_k\big(\mathfrak{g},H^{n-1}_{\rm res}(\mathfrak{g},V)\big)\longrightarrow H^{n+1}_{\rm res}(\mathfrak{g},V)\nonumber
\\
&\longrightarrow H^{n+1}(\mathfrak{g},V). \label{eq15}
\end{align}
\end{Lemma}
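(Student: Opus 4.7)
The plan is to derive both claims as purely formal consequences of the Friedlander--Parshall--Farnsteiner spectral sequence
\begin{gather*}
\operatorname{Hom}_k\bigl({\textstyle\bigwedge^i}\mathfrak{g},H^j_{\rm res}(\mathfrak{g},V)\bigr)\Longrightarrow H^{i+j}(\mathfrak{g},V)
\end{gather*}
recalled immediately before the statement; nothing specific about $\mathfrak{sl}_3$ enters.

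For $(i)$, the hypothesis $H^j_{\rm res}(\mathfrak{g},V)=0$ for all $j\le n$ makes every term of the spectral sequence with second index $\le n$ vanish, hence every $E_\infty$-subquotient contributing to the convergence filtration of $H^m(\mathfrak{g},V)$ with $m=i+j\le n$ vanishes. The filtration then collapses to zero, yielding $H^m(\mathfrak{g},V)=0$ for $m\le n$.

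For $(ii)$, the crucial preliminary step is to upgrade the assumed vanishing of ordinary cohomology to the corresponding vanishing of restricted cohomology. I would argue by induction on $j$: the base case is the tautology $H^0_{\rm res}(\mathfrak{g},V)=V^{\mathfrak{g}}=H^0(\mathfrak{g},V)=0$. Assuming $H^{q}_{\rm res}(\mathfrak{g},V)=0$ for all $q<j\le n-2$, every term of the spectral sequence with second index $<j$ already vanishes, so in the filtration of $H^j(\mathfrak{g},V)$ only the subquotient built from $H^j_{\rm res}(\mathfrak{g},V)$ can survive; both the incoming and outgoing differentials at that spot land in positions with strictly smaller second index, already killed by the inductive hypothesis. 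Hence $H^j(\mathfrak{g},V)\cong H^j_{\rm res}(\mathfrak{g},V)$, and the hypothesis $H^j(\mathfrak{g},V)=0$ forces the right-hand side to vanish. Running the very same edge computation one step further, at $j=n-1$ where ordinary cohomology need not vanish, immediately produces~\eqref{eq14}.

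The five-term exact sequence~\eqref{eq15} is then the standard low-degree exact sequence of the spectral sequence, extracted in total degrees $n$ and $n+1$: under the vanishing just established, the only terms which can contribute non-trivially are $H^n_{\rm res}(\mathfrak{g},V)$, $\operatorname{Hom}_k(\mathfrak{g},H^{n-1}_{\rm res}(\mathfrak{g},V))$, and $H^{n+1}_{\rm res}(\mathfrak{g},V)$, and the only differential between them which is not forced to be zero for degree reasons is the $d_2$ transgression carrying $\operatorname{Hom}_k(\mathfrak{g},H^{n-1}_{\rm res})$ into $H^{n+1}_{\rm res}$. Reading off the associated five-term sequence from the convergence filtration produces~\eqref{eq15}. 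The main obstacle is not conceptual but purely bookkeeping: one has to verify that every higher differential $d_r$ ($r\ge 3$) relevant to the stated range has source or target killed by the preliminary vanishing, ensuring that the filtrations on $H^n(\mathfrak{g},V)$ and $H^{n+1}(\mathfrak{g},V)$ record no hidden extensions beyond the stated sequence.
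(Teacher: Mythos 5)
Your proposal is correct and takes essentially the same route as the paper: the paper merely asserts that the lemma ``can be directly obtained'' from the Friedlander--Parshall--Farnsteiner spectral sequence $\bigoplus_{i+j=n}\operatorname{Hom}_k\big(\bigwedge^i\mathfrak{g},H^j_{\rm res}(\mathfrak{g},V)\big)\Rightarrow H^n(\mathfrak{g},V)$, and your argument supplies exactly the edge-homomorphism and low-degree bookkeeping that this assertion leaves implicit (including the necessary inductive upgrade of the vanishing of ordinary cohomology to restricted cohomology in part $(ii)$). The only point to watch is the indexing convention, under which the restricted-cohomology degree plays the role of the filtration index so that the surviving $d_2$ carries $\operatorname{Hom}_k\big(\mathfrak{g},H^{n-1}_{\rm res}(\mathfrak{g},V)\big)$ into $H^{n+1}_{\rm res}(\mathfrak{g},V)$; your vanishing analysis is consistent with this.
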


We start by calculating the cohomology $H^n\big(G_1,H^0(p-2,p-2)\big)$ with $n\leq \dim \mathfrak{g}$.

{\samepage\begin{Lemma}\label{l6}
Let $G_1$ be the first Frobenius kernel of $G$, and $V=H^0(p-2,p-2)$ the $G_1$-module. Then,
\begin{enumerate}\itemsep=0pt

\item[$(i)$] $H^{i}(G_1,V)=0$ for $i=0,1,2,4,6,8$,

\item[$(ii)$] $H^{3}(G_1,V)\cong L(1,1)^{(1)}$,

\item[$(iii)$] $H^5(G_1,V)\cong L(3,0)^{(1)}\oplus L(0,3)^{(1)}\oplus L(2,2)^{(1)}$.
\end{enumerate}
\end{Lemma}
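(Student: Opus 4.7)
The plan is to apply the Andersen--Jantzen formula \eqref{eq13} to $\lambda = (p-2)\omega_1 + (p-2)\omega_2$. First I would write $\lambda$ in the normal form $w\cdot 0 + p\nu$: since $w_0 = s_1 s_2 s_1$ has length $3$ and a direct computation of the dot action gives $w_0 \cdot 0 = -2\omega_1 - 2\omega_2$, we obtain $\lambda = w_0 \cdot 0 + p\rho$ with $\nu = \rho = (1,1)$. Substitution into \eqref{eq13} yields
\begin{gather*}
H^i(G_1,V)^{(-1)} \cong \operatorname{Ind}_B^G\bigl(S^{(i-3)/2}(\mathfrak{u}^*) \otimes k_\rho\bigr)
\end{gather*}
whenever $i \ge 3$ and $i-3$ is even, and $H^i(G_1,V) = 0$ otherwise. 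The vanishing statements in part~(i) for $i \in \{0,1,2,4,6,8\}$ follow at once from this parity and range argument (with the convention $S^{<0} = 0$).

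For part (ii), $i = 3$ gives $(i-3)/2 = 0$, so $S^0(\mathfrak{u}^*) = k$ and $H^3(G_1,V)^{(-1)} \cong \operatorname{Ind}_B^G(k_\rho) = H^0(1,1)$. Since $(1,1)$ is dominant and restricted for $p > 3$ and is minimal in its linkage class under $W_p$, one has $H^0(1,1) \cong L(1,1)$, whence $H^3(G_1,V) \cong L(1,1)^{(1)}$.

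For part (iii), $i = 5$ gives $H^5(G_1,V)^{(-1)} \cong \operatorname{Ind}_B^G(\mathfrak{u}^* \otimes k_\rho)$. Since $\mathfrak{u}$ has $T$-weights $-\alpha_1,-\alpha_2,-(\alpha_1+\alpha_2)$, the module $\mathfrak{u}^* \otimes k_\rho$ has $T$-weights $3\omega_1$, $3\omega_2$ and $2\omega_1 + 2\omega_2$, all dominant and restricted for $p > 3$. I would fix a $B$-filtration of $\mathfrak{u}^* \otimes k_\rho$ refining the weight-space decomposition, which exists because $B$ is solvable. By Kempf vanishing, each graded piece $\operatorname{Ind}_B^G k_\mu = H^0(\mu)$ has trivial higher cohomology, so the induction is exact on this filtration and $\operatorname{Ind}_B^G(\mathfrak{u}^* \otimes k_\rho)$ has a good filtration with factors $H^0(3,0)$, $H^0(0,3)$, $H^0(2,2)$. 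A short linkage check, computing the finite $W$-orbits of $(4,1)$, $(1,4)$ and $(3,3)$ and verifying that no two of them are congruent mod $pX(T)$ for $p > 3$, shows these three weights lie in three distinct blocks of $G$. The linkage principle then forces the filtration to split canonically as a direct sum, and the same minimality argument as in (ii) gives $H^0(\mu) = L(\mu)$ for each of the three weights, producing $H^5(G_1,V) \cong L(3,0)^{(1)} \oplus L(0,3)^{(1)} \oplus L(2,2)^{(1)}$.

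The main obstacle is part (iii): one must argue that $\operatorname{Ind}_B^G(\mathfrak{u}^* \otimes k_\rho)$ splits as a direct sum rather than a non-trivial extension of $H^0$-modules. This reduces to the linkage computation above, which, while elementary, must be carried out uniformly in $p > 3$; everything else in the proof is a mechanical application of \eqref{eq13} together with Kempf vanishing.
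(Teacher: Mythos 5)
Your overall route is the same as the paper's: normalize $\lambda=(p-2)(\omega_1+\omega_2)$ as $w_0\cdot 0+p\rho$ with $l(w_0)=3$ and read everything off from the Andersen--Jantzen formula \eqref{eq13}. Parts $(i)$ and $(ii)$ are correct, and in part $(iii)$ your treatment of $\operatorname{Ind}_B^{G}(\mathfrak{u}^*\otimes k_{\rho})$ is in fact more careful than the paper's: the paper silently replaces the $B$-module $\mathfrak{u}^*\otimes k_{\rho}$ by the direct sum of its weight spaces, whereas you pass through a $B$-filtration, use Kempf vanishing to obtain a good filtration with factors $H^0(3,0)$, $H^0(0,3)$, $H^0(2,2)$, and split it by the pairwise linkage check on $(4,1)$, $(1,4)$, $(3,3)$.

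The gap is in the last step of $(iii)$: the claim that ``the same minimality argument as in $(ii)$'' gives $H^0(2,2)\cong L(2,2)$. For $\mu=(2,2)$ one has $\langle\mu+\rho,(\alpha_1+\alpha_2)^{\vee}\rangle=6$, so for $p=5$ this weight lies above the first alcove wall and is \emph{not} minimal in its linkage class: $w_0(2,2)=(-2,-2)$ and $(3,3)-(-2,-2)=(5,5)\in 5X(T)$, so $(1,1)$ is a dominant weight strictly below $(2,2)$ and linked to it, and Jantzen's sum formula gives $[H^0(2,2):L(1,1)]=1$ when $p=5$. Hence for $p=5$ the module $H^0(2,2)$ is a nonsplit extension of $L(1,1)$ by $L(2,2)$, not $L(2,2)$ itself. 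Your pairwise check that $(4,1)$, $(1,4)$, $(3,3)$ are mutually non-congruent modulo $pX(T)$ only separates the three blocks from one another; it does not show that each weight is alone in its linkage class, which is what the final identification requires. For $p\geq 7$ all three weights lie in the closure of the lowest alcove, so $H^0(\mu)=V(\mu)=L(\mu)$ and your argument is complete; the case $p=5$ genuinely fails as written and needs a separate treatment. (The paper's own proof makes the same unqualified identification $\operatorname{Ind}_B^{G}(k_{2\omega_1+2\omega_2})\cong L(2,2)$, so this is a shared issue rather than a divergence from the paper, but as a justification your minimality claim is the step that breaks.)
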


}

\begin{proof}
$(i)$ Since
\begin{gather*}
\lambda=(p-2)(\omega_1+\omega_2)=s_1s_2s_1\cdot 0+p(\omega_1+\omega_2),
\end{gather*}
we get
\begin{gather*}
w=s_1s_2s_1,\qquad l(w)=3,\qquad\text{and}\qquad\nu=\omega_1+\omega_2.
\end{gather*}
Then, by \eqref{eq13},
$H^{i}(G_1,V)=0$ for $i=0,1,2,4,6,8$. The statement $(i)$ is proved.

$(ii)$ We have
\begin{gather*}
S^{(3-l(w))/2}(\mathfrak{u}^*)\otimes k_{\nu}=S^0(\mathfrak{u}^*)\otimes k_{\nu}\cong k_{\nu}=k_{\omega_1+\omega_2}
\end{gather*}
and
\begin{gather*}
H^0\big(S^{(3-l(w))/2}(\mathfrak{u}^*)\otimes k_{\nu}\big)\cong H^0(k_{\omega_1+\omega_2})=\operatorname{Ind}_B^{G}(k_{\omega_1+\omega_2})\cong L(1,1).
\end{gather*}
Then, by \eqref{eq13},
\begin{gather*}
H^3\big(G_1,H^0(p-2,p-2)\big)^{(-1)}\cong L(1,1).
\end{gather*}

$(iii)$ We have
\begin{gather*}
S^{(5-l(w))/2}(\mathfrak{u})^*\otimes k_{\nu}\cong (k_{\alpha_1}\oplus
k_{\alpha_2}\oplus k_{\alpha_1+\alpha_2})\otimes k_{\omega_1+\omega_2}\cong
k_{3\omega_1}\oplus k_{3\omega_2}\oplus k_{2\omega_1+2\omega_2}
\end{gather*}
and
\begin{gather*}
H^0\big(S^{(5-l(w))/2}(\mathfrak{u}^*)\otimes k_{\nu}\big)=\operatorname{Ind}_B^{G}(k_{3\omega_1}\oplus k_{3\omega_2}\oplus k_{2\omega_1+2\omega_2})
 \cong L(3,0)\oplus L(0,3)\oplus L(2,2).
 \end{gather*}
Then, by \eqref{eq13},
\begin{equation*}
H^5\big(G_1,H^0(p-2,p-2)\big)^{(-1)}\cong L(3,0)\oplus L(0,3)\oplus L(2,2).
\end{equation*}
\renewcommand{\qed}{}
\end{proof}

Now, we calculate the cohomology $H^n\big(\mathfrak{g},H^0(p-2,p-2)\big)$.

\begin{Lemma}\label{l7}
Let $\mathfrak{g}=\mathfrak{sl}_3(k)$ and $V=H^0(p-2,p-2)$. Then,
\begin{enumerate}\itemsep=0pt
\item[$(i)$] $H^{i}(\mathfrak{g},V)=0 $ for $i=0,1,2$,
\item[$(ii)$] $H^{3}(\mathfrak{g},V)\cong L(1,1)^{(1)}$,
\item[$(iii)$] $H^4(\mathfrak{g},V)\cong 2L(1,1)^{(1)}\oplus k$.
\end{enumerate}
\end{Lemma}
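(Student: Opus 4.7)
The plan is to deduce the lemma from the restricted-cohomology computations in Lemma~\ref{l6} using the comparison machinery of Lemma~\ref{l5}.

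Part~$(i)$ is immediate: since $H^i_{\rm res}(\mathfrak{g},V)=0$ for $i\leq 2$ by Lemma~\ref{l6}$(i)$, Lemma~\ref{l5}$(i)$ yields $H^i(\mathfrak{g},V)=0$ for $i\leq 2$. For part~$(ii)$, with this vanishing established, the isomorphism \eqref{eq14} applied with $n=4$ together with Lemma~\ref{l6}$(ii)$ gives
\[H^3(\mathfrak{g},V)\cong H^3_{\rm res}(\mathfrak{g},V)\cong L(1,1)^{(1)}.\]

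For part~$(iii)$, substitute $n=4$ into the exact sequence \eqref{eq15}. Using $H^4_{\rm res}=0$, $H^3_{\rm res}\cong L(1,1)^{(1)}$, and $H^5_{\rm res}\cong L(3,0)^{(1)}\oplus L(0,3)^{(1)}\oplus L(2,2)^{(1)}$ from Lemma~\ref{l6}, I would obtain
\[0\to H^4(\mathfrak{g},V)\to \operatorname{Hom}_k\bigl(\mathfrak{g},L(1,1)^{(1)}\bigr)\to L(3,0)^{(1)}\oplus L(0,3)^{(1)}\oplus L(2,2)^{(1)}\to H^5(\mathfrak{g},V),\]
so $H^4(\mathfrak{g},V)$ identifies with the kernel of the connecting homomorphism. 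Because composition factors of $H^n(\mathfrak{g},V)$ must be Frobenius twists (weights in $pX(T)$), the $G$-module $\operatorname{Hom}_k(\mathfrak{g},L(1,1)^{(1)})$ appearing in the Friedlander--Parshall--Farnsteiner spectral sequence should be read as a Frobenius twist of $L(1,1)\otimes L(1,1)$, and the Clebsch--Gordan-type decomposition (valid for $p>3$)
\[L(1,1)\otimes L(1,1)\cong L(2,2)\oplus L(3,0)\oplus L(0,3)\oplus 2L(1,1)\oplus k\]
shows that its composition factors are $L(2,2)^{(1)},L(3,0)^{(1)},L(0,3)^{(1)},2L(1,1)^{(1)},k$. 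By $G$-equivariance and matching of highest weights, the connecting map is then the projection onto the three distinct simple summands $L(3,0)^{(1)}\oplus L(0,3)^{(1)}\oplus L(2,2)^{(1)}=H^5_{\rm res}$, and its kernel is therefore $2L(1,1)^{(1)}\oplus k$, the claimed answer.

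The main obstacle is the $G$-module analysis in the previous paragraph: one must reconcile the naive interpretation $\mathfrak{g}^*\otimes L(1,1)^{(1)}\cong L(p+1,p+1)$ (which is simple by Steinberg's tensor product theorem) with the correct Frobenius-twisted structure produced by the spectral sequence, and then verify that the connecting map really projects onto the three outer summands (rather than vanishing or having a different image). Without the twisted interpretation, the exact sequence would force $H^4(\mathfrak{g},V)$ to equal the 64-dimensional simple module $L(p+1,p+1)$, contradicting the Frobenius-twist constraint on cohomology; the twisted reading together with weight-matching between the summands of $\operatorname{Hom}_k(\mathfrak{g},L(1,1)^{(1)})$ and the summands of $H^5_{\rm res}$ pins down the kernel as stated.
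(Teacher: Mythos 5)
Your argument is correct and follows essentially the same route as the paper: parts $(i)$ and $(ii)$ via Lemma~\ref{l5} and \eqref{eq14}, and part $(iii)$ by reading $\operatorname{Hom}_k\big(\mathfrak{g},H^3_{\rm res}(\mathfrak{g},V)\big)$ as a Frobenius twist of $L(1,1)\otimes L(1,1)\cong L(3,0)\oplus L(0,3)\oplus L(2,2)\oplus 2L(1,1)\oplus k$ and extracting $H^4(\mathfrak{g},V)$ from the five-term sequence \eqref{eq15}. The surjectivity of the connecting map onto $H^5_{\rm res}$, which you rightly flag as the delicate point, is left implicit in the paper as well; it can be pinned down by observing that $L(3,0)^{(1)}$, $L(0,3)^{(1)}$ and $L(2,2)^{(1)}$ cannot occur as composition factors of $H^4(\mathfrak{g},V)$ because their highest weights are not below the highest weight $p(\omega_1+\omega_2)$ of $\bigwedge^4\mathfrak{g}^*\otimes V$ in the dominance order, so the kernel is exactly $2L(1,1)^{(1)}\oplus k$.
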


\begin{proof}
$(i)$ Follows from the statements $(i)$ of Lemmas~\ref{l5} and \ref{l6}.
$(ii)$ Follows from the statements $(ii)$ of Lemma~\ref{l6} and formula \eqref{eq14}.
$(iii)$ We get
\begin{align*}
\operatorname{Hom}_k\big(\mathfrak{g},H^3_{\rm res}\big(\mathfrak{g},H^0(p-2,p-2)\big)^{(-1)}\big)&\cong \mathfrak{g}^*\otimes H^3\big(G_1,H^0(p-2,p-2)\big)^{(-1)}
\\
&\cong L(1,1)\otimes L(1,1)
\\
&\cong
L(3,0)\oplus L(0,3)\oplus L(2,2)\oplus 2L(1,1)\oplus k.
\end{align*}
Then, by \eqref{eq15} and the statement $(iii)$ of Lemma~\ref{l6},
\begin{equation*} H^4\big(\mathfrak{g},H^0(p-2,p-2)\big)^{(-1)}\cong 2L(1,1)\oplus k.
\end{equation*}
\renewcommand{\qed}{}
\end{proof}

For $H^n(\mathfrak{g},M)$, where $M=L(p-2,p-2)$, we obtain the following result:

\begin{Lemma}\label{l8}
Let $\mathfrak{g}=\mathfrak{sl}_3(k)$ and $M=L(p-2,p-2)$. Then, $H^n(\mathfrak{g},M)=0$, except for the following cases:

\begin{enumerate}\itemsep=0pt

\item[$(i)$] $H^{1}(\mathfrak{g},M)\cong H^{7}(\mathfrak{g},M)\cong k$,

\item[$(ii)$] $H^{3}(\mathfrak{g},M)\cong H^{5}(\mathfrak{g},M)\cong L(1,1)^{(1)}$,

\item[$(iii)$] $H^4(\mathfrak{g},M)\cong 2L(1,1)^{(1)}\oplus 2k$.

\end{enumerate}
\end{Lemma}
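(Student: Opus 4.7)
The plan is to combine the long exact cohomology sequence \eqref{eq12} associated to \eqref{eq1} with the duality isomorphism \eqref{is1}. Since $-w_0(p-2,p-2) = (p-2,p-2)$, we have $M^* \cong M$, so by \eqref{is1}
\begin{gather*}
H^n(\mathfrak{g},M) \cong \bigl(H^{8-n}(\mathfrak{g},M)\bigr)^*.
\end{gather*}
Consequently it suffices to compute $H^n(\mathfrak{g},M)$ for $n \leq 4$, and the remaining groups will follow from self-duality of $k$ and $L(1,1)$.

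Step one: feed the data of Lemma~\ref{l7} and part~$(a)$ of Theorem~\ref{th1} ($H^i(\mathfrak{g}) = 0$ for $i \in \{1,2,4,6,7\}$, and $H^0(\mathfrak{g}) \cong H^3(\mathfrak{g}) \cong k$) into the long exact sequence. The terms for $n \leq 2$ read $0 \to 0 \to 0 \to k \to H^1(\mathfrak{g},M) \to 0 \to 0 \to H^2(\mathfrak{g},M) \to 0$, yielding $H^0(\mathfrak{g},M) = 0$, $H^1(\mathfrak{g},M) \cong k$, and $H^2(\mathfrak{g},M) = 0$.

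Step two: the portion of the long exact sequence around degrees three and four becomes
\begin{gather*}
0 \longrightarrow H^3(\mathfrak{g},M) \longrightarrow L(1,1)^{(1)} \xrightarrow{\;\varphi\;} k \longrightarrow H^4(\mathfrak{g},M) \longrightarrow 2L(1,1)^{(1)} \oplus k \longrightarrow 0,
\end{gather*}
where the final zero uses $H^4(\mathfrak{g}) = 0$. Because $\varphi$ is a $G$-module map between simple $G$-modules with different highest weights, it must vanish. This identifies $H^3(\mathfrak{g},M) \cong L(1,1)^{(1)}$ and leaves the short exact sequence
\begin{gather*}
0 \longrightarrow k \longrightarrow H^4(\mathfrak{g},M) \longrightarrow 2L(1,1)^{(1)} \oplus k \longrightarrow 0.
\end{gather*}

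Step three is the crux. To conclude $H^4(\mathfrak{g},M) \cong 2L(1,1)^{(1)} \oplus 2k$ one must show that this extension splits in the category of rational $G$-modules, i.e.\ that $\mathrm{Ext}^1_G\bigl(2L(1,1)^{(1)} \oplus k,\,k\bigr) = 0$. This reduces to two vanishings: $\mathrm{Ext}^1_G(k,k) = H^1(G,k) = 0$ because $G = \mathrm{SL}_3(k)$ is semisimple and simply connected, and $\mathrm{Ext}^1_G(L(1,1)^{(1)},k) = 0$ by the linkage principle, since for $p>3$ the weights $p(\omega_1+\omega_2)$ and $0$ lie in distinct $W_p$-orbits under the dot action.

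Step four simply invokes duality: $H^5(\mathfrak{g},M) \cong H^3(\mathfrak{g},M)^* \cong L(1,1)^{(1)}$, $H^6(\mathfrak{g},M) \cong H^2(\mathfrak{g},M)^* = 0$, $H^7(\mathfrak{g},M) \cong H^1(\mathfrak{g},M)^* \cong k$, and $H^8(\mathfrak{g},M) \cong H^0(\mathfrak{g},M)^* = 0$, using that $L(1,1)$ and $k$ are self-dual. I expect Step three to be the main obstacle, as the rest is essentially a mechanical diagram chase once Lemma~\ref{l7} is in hand; the extension-splitting argument is where the representation theory of $G$ (rather than of $\mathfrak{g}$) genuinely enters.
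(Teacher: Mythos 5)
Your overall route is the same as the paper's: feed Lemma~\ref{l7} and the known $H^{\bullet}(\mathfrak{g})$ into the long exact sequence \eqref{eq12} attached to \eqref{eq1}, and recover the degrees $n\geq 5$ from self-duality of $M$ via \eqref{is1}. Steps one, two and four are correct and coincide with what the paper does (in step two, the map $\varphi$ vanishes because it is a $G$-map between non-isomorphic simple $G$-modules).

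Step three, however, which you correctly identify as the crux, rests on a false claim. The weights $p(\omega_1+\omega_2)$ and $0$ are \emph{not} in distinct $W_p$-orbits under the dot action: in type $A_2$ one has $\omega_1+\omega_2=\alpha_1+\alpha_2$, so $p(\omega_1+\omega_2)\in p\mathbb{Z}\Phi$ and the translation by $p(\alpha_1+\alpha_2)$, which lies in $W_p$, carries $0$ to $p(\omega_1+\omega_2)$ under the dot action. Hence the linkage principle, applied as you state it, says nothing about $\operatorname{Ext}^1_G\big(L(1,1)^{(1)},k\big)$. The vanishing you need is nevertheless true, but it must be obtained differently: for instance, since $\mathfrak{sl}_3$ is perfect for $p>3$ one has $H^1(G_1,k)=0$, so the five-term exact sequence of the Lyndon--Hochschild--Serre spectral sequence for $G_1\trianglelefteq G$ gives $\operatorname{Ext}^1_G\big(L(1,1)^{(1)},k\big)\cong \operatorname{Ext}^1_G\big(L(1,1),k\big)$, and the \emph{untwisted} weights $(1,1)$ and $(0,0)$ genuinely are unlinked for $p>3$ (none of the differences $2\rho-w(\rho)$, $w\in W$, lies in $p\mathbb{Z}\Phi$), so linkage now applies. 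Alternatively, note that $G_1$ acts trivially on $H^4(\mathfrak{g},M)$, so it is the Frobenius twist of a $G$-module with composition factors $2L(1,1)$ and $2L(0,0)$, and conclude by linkage together with the vanishing of self-extensions of simple $G$-modules. For comparison, the paper's own justification at this point (``there is no $G$-homomorphism between $L(1,1)^{(1)}$ and $k$'') is likewise insufficient to split an extension, so your instinct to address $\operatorname{Ext}^1$ rather than $\operatorname{Hom}$ is the right one; it is only the specific linkage computation that fails.
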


\begin{proof} Obviously, $H^0\big(\mathfrak{g},H^0(p-2,p-2)\big)=H^8\big(\mathfrak{g},H^0(p-2,p-2)\big)=0$.

$(i)$ The initial terms of the exact sequence~\eqref{eq12} give us the following exact sequence:
\begin{gather*}0\longrightarrow H^0(\mathfrak{g})\longrightarrow H^1(\mathfrak{g},L(p-2,p-2))\longrightarrow H^1\big(\mathfrak{g},H^0(p-2,p-2)\big).\end{gather*} By Lemma~\ref{l7},
\begin{gather*}H^1(\mathfrak{g},L(p-2,p-2))\cong H^0(\mathfrak{g})\cong k.\end{gather*}
Since $L(p-2,p-2)$ is a self-dual module, then by~\eqref{is1}, we get
\begin{gather*}H^7(\mathfrak{g},L(p-2,p-2))\cong H^1(\mathfrak{g},L(p-2,p-2))^*\cong k.
\end{gather*}

Let us prove that $H^2(\mathfrak{g},L(p-2,p-2))=0$. Obviously, $H^1(\mathfrak{g})=0$, and by Lemma~\ref{l7}, $H^2\big(\mathfrak{g},H^0(p-2,p-2)\big)=0$. Then, it follows from the exactness of the sequence~\eqref{eq12} that $H^2(\mathfrak{g},L(p-2,p-2))=0$. By~\eqref{is1},
\begin{gather*}H^6(\mathfrak{g},L(p-2,p-2))\cong H^2(\mathfrak{g},L(p-2,p-2))^*=0.\end{gather*}

$(ii)$ Since $H^2(\mathfrak{g})=0$, it follows from the exactness of the sequence~\eqref{eq12} that the sequence
\begin{gather} 0\longrightarrow H^3(\mathfrak{g},L(p-2,p-2))\longrightarrow H^3\big(\mathfrak{g},H^0(p-2,p-2)\big)\longrightarrow H^3(\mathfrak{g}).\label{eq2}
\end{gather}
is exact. It is known that $H^3(\mathfrak{g})\cong k$, see \cite[p.~113]{CheE48}. Moreover, by Lemma~\ref{l7},
\begin{gather*}
H^3\big(\mathfrak{g},H^0(p-2,p-2)\big)\cong L(1,1)^{(1)}.
\end{gather*}
Then, the exactness of the sequence~\eqref{eq2} implies that
\begin{gather*}
H^3(\mathfrak{g},L(p-2,p-2))\cong H^3\big(\mathfrak{g},H^0(p-2,p-2)\big)\cong L(1,1)^{(1)},
\end{gather*}
since there is no $G$-homomorphism between the modules $L(1,1)^{(1)}$ and $k$. By~\eqref{is1},
\begin{gather*}H^5(\mathfrak{g},L(p-2,p-2))\cong H^3(\mathfrak{g},L(p-2,p-2))^*\cong L(1,1)^{(1)}.
\end{gather*}

 $(iii)$ In the previous statement, we proved that
\begin{gather*}
H^3(\mathfrak{g},L(p-2,p-2))\cong H^3\big(\mathfrak{g},H^0(p-2,p-2)\big).
\end{gather*}
Then, since the sequence~\eqref{eq12} is exact, the following sequence is exact:
\begin{gather*}
0\longrightarrow H^3(\mathfrak{g})\longrightarrow H^4(\mathfrak{g},L(p-2,p-2))\longrightarrow H^4\big(\mathfrak{g},H^0(p-2,p-2)\big)\longrightarrow H^4(\mathfrak{g}).
\end{gather*}
Since $H^3(\mathfrak{g})\cong k$ and $H^4(\mathfrak{g})=0$, then the sequence
\begin{gather*}
0\longrightarrow k\longrightarrow H^4(\mathfrak{g},L(p-2,p-2))\longrightarrow H^4\big(\mathfrak{g},H^0(p-2,p-2)\big)\longrightarrow 0
\end{gather*}
is exact. By the statement $(iii)$ of Lemma~\ref{l7},
\begin{gather*}
H^4\big(\mathfrak{g},H^0(p-2,p-2)\big)\cong 2L(1,1)^{(1)}\oplus k.
\end{gather*}
There is no $G$-homomorphism between the modules $L(1,1)^{(1)}$ and $k$, so the last exact sequence is split. Then, we get an isomorphism of $G$-modules of the statement $(iii)$.
\end{proof}

Theorem~\ref{th1} follows from Lemmas~\ref{l1}--\ref{l4} and \ref{l8}.

\section{Cohomology with coefficients in Weyl modules}
%\label{s4}

In this section, we prove Corollaries~\ref{c2} and~\ref{c3}.
Let us start with Corollary~\ref{c2}. We use the following {\it linkage principle} (see \cite[p.~264]{Humph80}):
{\it Let $V$ be indecomposable $G$-module, having $L(\lambda)$ and $L(\mu)$ as composition factors. Then $\lambda$ and $\mu$ are linked.}

Obviously, $H^0(\lambda)$ is peculiar, if it contains a peculiar composition factor.
According to the linkage principle, any composition factor of $H^0(\lambda)$ is linked to $L(\lambda)$. By Theorem~\ref{th1}, there are only six peculiar simple modules. Therefore, $H^0(\lambda)$ is peculiar only in the following cases, which appear in Theorem~\ref{th1}:
\begin{gather*}
\lambda=0,\quad
(p-2)\omega_1+\omega_2,\quad
\omega_1+(p-2)\omega_2,\quad
(p-3)\omega_1,\quad
(p-3)\omega_2,\quad
(p-2)(\omega_1+\omega_2).
\end{gather*}
Let us consider each of these cases separately.

 $(a)$ Obviously, $H^0(0,0)\cong k$. Then, the needed statement follows from the statement $(a)$ of Theorem~\ref{th1}.

Further, we will proceed as in the proof of Lemma~\ref{l8}.

$(b)$ There is the short exact sequence
\begin{gather*}
0\longrightarrow L(p-2,1)\longrightarrow H^0(p-2,1)\longrightarrow L(p-3,0)\longrightarrow 0.
\end{gather*}
Consider the corresponding long cohomological exact sequence
\begin{gather*}
\cdots \longrightarrow H^n(\mathfrak{g},L(p-2,1))\longrightarrow H^n\big(\mathfrak{g},H^0(p-2,1)\big)\longrightarrow
H^n(\mathfrak{g},L(p-3,0))\longrightarrow \cdots.
\end{gather*}
According to Lemmas~\ref{l1}, \ref{l3}, the last long cohomological exact sequence splits into the following exact sequences:
\begin{gather*}
0\longrightarrow H^0\big(\mathfrak{g},H^0(p-2,1)\big)\longrightarrow 0,
\\
0\longrightarrow L(1,0)^{(1)}\longrightarrow H^1\big(\mathfrak{g},H^0(p-2,1)\big)\longrightarrow 0,
\\
0\longrightarrow H^2\big(\mathfrak{g},H^2(p-2,1)\big)\longrightarrow L(1,0)^{(1)}\longrightarrow 0,
\\
0\longrightarrow H^3\big(\mathfrak{g},H^0(p-2,1)\big)\longrightarrow H^3(\mathfrak{g},L(p-3,0))\longrightarrow 2L(1,0)^{(1)}
\\ \hphantom{0}
{}\longrightarrow
H^4\big(\mathfrak{g},H^0(p-2,1)\big)\longrightarrow 0,
\\
0\longrightarrow H^5\big(\mathfrak{g},H^0(p-2,1)\big)\longrightarrow L(1,0)^{(1)}\longrightarrow 0,
\\
0\longrightarrow H^6(\mathfrak{g},H^0(p-2,1))\longrightarrow L(1,0)^{(1)}\longrightarrow H^7(\mathfrak{g},L(p-2,1))
\\ \hphantom{0}
{}\longrightarrow
H^7\big(\mathfrak{g},H^0(p-2,1)\big)\longrightarrow 0,
\\
0\longrightarrow H^8\big(\mathfrak{g},H^0(p-2,1)\big)\longrightarrow 0.
\end{gather*}
The first and last exact sequences yield $H^n\big(\mathfrak{g},H^0(p-2,1)\big)=0$ for $n=0,8$. The second, third, and fifth exact sequences yield isomorphisms
\begin{gather*}
H^1\big(\mathfrak{g},H^0(p-2,1)\big)\cong L(1,0)^{(1)},
\qquad
H^2\big(\mathfrak{g},H^0(p-2,1)\big)\cong L(1,0)^{(1)},
\end{gather*}
and
\begin{gather*}
H^5\big(\mathfrak{g},H^0(p-2,1)\big)\cong L(1,0)^{(1)},
\end{gather*}
respectively.
Consider the fourth exact sequence. Composition factors of $H^3\big(\mathfrak{g},H^0(p-2,1)\big)$ can only be $H^3$ with coefficients in either $L(p-2,1)$ or the socle of $H^0(p-2,1)/L(p-2,1)$.
According to Lemma~\ref{l1}, $H^3(\mathfrak{g},L(p-2,1))=0$. The socle of $H^0(p-2,1)/L(p-2,1)$ is isomorphic to the simple module $L(p-3,0)$. According to Lemma~\ref{l3},
\begin{gather*}
H^3(\mathfrak{g},L(p-3,0))\cong L(1,0)^{(1)}.
\end{gather*}
Therefore,
\begin{gather*}
H^3\big(\mathfrak{g},H^0(p-2,1)\big)\cong L(1,0)^{(1)}.
\end{gather*}
Then, the fourth exact sequence yields an isomorphism
\begin{gather*}
H^4\big(\mathfrak{g},H^0(p-2,1)\big)\cong 2L(1,0)^{(1)}.
\end{gather*}

According to Lemma~\ref{l1}, in the sixth exact sequence, the map
\begin{gather*}
H^7(\mathfrak{g},L(p-2,1))\longrightarrow
H^7\big(\mathfrak{g},H^0(p-2,1)\big)
\end{gather*}
is an epimorphism. Consequently, there are the isomorphisms
\begin{gather*}
H^7(\mathfrak{g},L(p-2,1))\cong
H^7\big(\mathfrak{g},H^0(p-2,1)\big)\cong L(1,0)^{(1)}.
\end{gather*}
Then the sixth exact sequence yields an isomorphism
\begin{gather*}
H^6\big(\mathfrak{g},H^0(p-2,1)\big)\cong L(1,0)^{(1)}.
\end{gather*}

$(c)$ The proof is similar to the previous statement.

$(d)$ Since $H^0(p-3,0)\cong L(p-3,0)$, the statement follows from the statement $(d)$ of Theorem~\ref{th1}.

$(e)$ Since $H^0(0,p-3)\cong L(0,p-3)$, the statement follows from the statement $(e)$ of Theorem~\ref{th1}.

$(f)$ A part of this statement is proved in Lemma~\ref{l7}. We will prove only the rest of the statement. Using the statement $(a)$ of Theorem~\ref{th1} and Lemma~\ref{l8}, and the long cohomological exact sequence \eqref{eq12}, we obtain the following exact sequences:
\begin{gather*}
0\longrightarrow L(1,1)^{(1)}\longrightarrow H^5\big(\mathfrak{g},H^0(p-2,p-2)\big)\longrightarrow k \longrightarrow 0,
\\
0\longrightarrow H^6\big(\mathfrak{g},H^0(p-2,p-2)\big)\longrightarrow 0,
\\
0\longrightarrow k\longrightarrow H^7\big(\mathfrak{g},H^0(p-2,p-2)\big)\longrightarrow 0,
\\
0\longrightarrow H^8\big(\mathfrak{g},H^0(p-2,p-2)\big)\longrightarrow k \longrightarrow 0.
\end{gather*}
These exact sequences yield the isomorphisms
\begin{gather*}
H^5\big(\mathfrak{g},H^0(p-2,p-2)\big)\cong L(1,1)^{(1)}\oplus k,
\\
H^6\big(\mathfrak{g},H^0(p-2,p-2)\big)=0,
\\
H^7\big(\mathfrak{g},H^0(p-2,p-2)\big)\cong k,
\\
H^8\big(\mathfrak{g},H^0(p-2,p-2)\big)\cong k.
\end{gather*}

Using \eqref{is1} and Corollary~\ref{c2} for $H^n(\mathfrak{g},V(\lambda))$, we get Corollary~\ref{c3}.

Corollaries~\ref{c2} and~\ref{c3} show that $H^n\big(\mathfrak{g},H^0(\lambda)\big)\cong H^n(\mathfrak{g},V(\lambda))$, except for the case where \begin{gather*}
 \lambda=(p-2)(\omega_1+\omega_2).
 \end{gather*}

\subsection*{Acknowledgements}

This research is funded by the Science Committee of the Ministry of Education and Science of the Republic of Kazakhstan (grant No~AP08855935).
The author is grateful to the editor and referees whose comments greatly improved the exposition of this paper.

\pdfbookmark[1]{References}{ref}
\LastPageEnding

\end{document}